\newtheorem{theorem}{Theorem}[section] 
\newtheorem{lemma}[theorem]{Lemma}     
\newtheorem{proposition}[theorem]{Proposition}
\theoremstyle{definition}
\newtheorem{example}[theorem]{Example}
\newtheorem{remark}[theorem]{Remark}
\title[Simultaneous diagonalization of vector bundles]
{Simultaneous diagonalization of vector bundles}
\author{Luis Arenas-Carmona}
\newcommand\ad{\mathbb{A}}
\newcommand\alge{\mathfrak{A}}
\newcommand\oink{\mathcal O}
\newcommand\enteri{\mathbb Z}
\newcommand\Da{\mathfrak{D}}
\newcommand\Ea{\mathfrak{E}}
\newcommand\Fa{\mathfrak{F}}
\newcommand\bmattrix[4]{\left(\begin{array}{cc}#1&#2\\#3&#4\end{array}\right)}
\newcommand\sbmattrix[4]{\textnormal{\scriptsize$\left(\begin{array}{cc}#1&#2\\#3&#4\end{array}\right)$\normalsize}}
\newcommand\matrici{\mathbb{M}}
\newcommand\finitum{\mathbb{F}}
\begin{document}
\maketitle

\begin{abstract}
Grothendieck-Birkhoff Theorem states that every finite dimensional vector bundle over the projective line splits as the 
sum of one dimensional vector bundles. In this work we study simultaneous splittings of two dimensional vector
bundles over a finite field using the theory of Eichler orders. In the sheaf-theoretical context, an Eichler
order in a matrix algebra is the intersection of the sheaves of endomorphisms  of two vector bundles,
so characterizing split Eichler orders solves the problem of simultaneous splitting. 
 We caracterize both the genera of Eichler orders containing only split Eichler orders
and the genera containing only a finite number of non-split clases, in terms of a divisor-valued distance 
parametrizing the genera.
\end{abstract}

\section{Introduction}\label{intro}

In all of this work $K$ is a global function field, i.e., the field of rational functions on a smooth irreducible
projective curve $X$ over a finite field. We let $\oink_X$ denote the structure sheaf of $X$. 
Let $\alge=\mathbb{M}_2(K)$ be the $4$-dimensional matrix algebra, while $\Da$ and $\Ea$ denote 
$X$-orders of maximal rank in $\alge$, as defined below. 

An $X$-lattice 
$\Lambda$ is a locally free sheaf of  $\oink_X$-modules of finite rank $n$.
The sheaf of sections of a vector bundle is an $X$-lattice, and as usual we identify the bundle and the lattice. 
The generic fiber $\Lambda\otimes_{\oink_X}K$ is isomorphic to $K^n$ as a vector space over $K$,
 and we fix one such isomorphism by saying that $\Lambda$ is a lattice in $K^n$. Equivalently, we choose
a $K$-linearly independent set of $n$  sections over some afine subset $U_0\subset X$ and identify it with the canonical basis
of $K^n$. 
This implies that the group of $U$-sections $\Lambda(U)$ is identified with a subset of $K^n$ for any
open set $U\subseteq X$. 
  Thus defined, two lattices $\Lambda$ and $\Lambda'$, or their corresponding bundles,
are isomorphic if and only if there exists an invertible n-by-n matrix $T$ satisfying $T\Lambda=\Lambda'$.  
 Similar conventions applies to other explicit vector spaces.
 Note that $\Lambda(U)$ is a lattice over the Dedekind domain $\oink_X(U)$
as defined in \cite{Om}.
An order $\Da$ in a $K$-algebra $\alge$ is an $X$-lattice in $\alge$ such that $\Da(U)$ is a ring for any
open subset $U$, e.g., the structure sheaf $\oink_X$ is an $X$-order in $K$. 
We let  $\finitum=\oink_X(X)$ denote the full constant field  of $K$.
 The group of global sections $\Lambda(X)$ is a finite dimensional vector space over $\finitum$ for any 
$X$-lattice $\Lambda$. 

 Recall that every $X$-bundle in the one dimensional space $K$  has the form 
$$\mathfrak{L}^B(U)=\left\{f\in K\Big|\mathrm{div}(f)|_U+B|_U\geq0\right\},$$
for some fixed divisor $B$ on $X$, and for every open set $U\subseteq X$.
These bundles are usually called invertible bundles in current literature, and have the following properties:
\begin{enumerate}
\item  Linearly equivalent divisors define isomorphic bundles,
\item $\mathfrak{L}^B\mathfrak{L}^D= \mathfrak{L}^{B+D}$, for every pair of divisors $(B,D)$,
\item $\mathfrak{L}^B(U)\subseteq\mathfrak{L}^D(U)$ for every open set $U$ if and only if
$B\leq D$.
\end{enumerate}
In (b), $\mathfrak{L}^B\mathfrak{L}^D$ denotes the sheaf defined by $(\mathfrak{L}^B\mathfrak{L}^D)(U)=
\mathfrak{L}^B(U)\mathfrak{L}^D(U)$ on open sets $U\subseteq X$, which is isomorphic to the
tensor product $\mathfrak{L}^B\otimes_{\oink_X}\mathfrak{L}^D$. In higher dimensions, similar conventions apply
 to scalar products or other bilinear maps. 

A split $X$-lattice or split $X$-bundle is a lattice isomorphic to a direct sum of invertible bundles, 
e.g., a two dimensional $X$-lattice $\Lambda$ is split if $\Lambda\cong\mathfrak{L}_1\times\mathfrak{L}_2$,
as $\oink_X$-modules, where $\mathfrak{L}_1$ and $\mathfrak{L}_2$ are  invertible bundles.
We say that a basis $\{e_1,e_2\}$ splits or diagonalizes an $X$-bundle $\Lambda$ in $K^2$ if 
$\Lambda=\mathfrak{L}_1e_1\oplus\mathfrak{L}_2e_2$ where $\mathfrak{L}_1$ and $\mathfrak{L}_2$
are invertible bundles. Certainly a bundle in $K^2$ is split if and only if  it is split by at least one basis.

Since an isomorphism between $\Lambda\otimes_{\oink_X}K$ and $K^2$, as constant bundles, is given by 
the choice of two linearly independent sections $v_1$ and $v_2$,
 finding a split basis for $\Lambda$ is equivalent to finding
a matrix $T=\sbmattrix abcd\in\matrici_2(K)$ satisfying
$$\Lambda=\mathfrak{L}_1(av_1+cv_2)+\mathfrak{L}_2(cv_1+dv_2),$$
where $\mathfrak{L}_1$ and $\mathfrak{L}_2$ are  invertible bundles. The problem of simultaneous splitting
consist on finding a common  matrix that is useful for both bundles. This is an analog for bundles over projective
curves of the theory
of invariant factors for dedekind domains. Our results shows that simultaneous diagonalization is
not always possible, but it is always possible on the projective line, for two bundles that are "very close", 
in a sense we define shortly.

 To each $X$-bundle $\Lambda$ in $K^2$ we associate
the order $\Da_{\Lambda}=\mathcal{E}\mathit{nd}_{\oink_X}(\Lambda)$
 in the matrix algebra $\matrici_2(K)$, defined by 
$$\Da_\Lambda(U)=\left\{a\in \matrici_2(K)\Big|a\Lambda(U)\subseteq\Lambda(U)\right\},$$
for every open set $U\subseteq X$. This is a maximal order in the matrix algebra and every maximal order
of this algebra has the form $\Da_\Lambda$ for some $X$-bundle $\Lambda$ in $K^2$.  
 The $X$-bundle $\Lambda$ is split by a certain basis $\{e_1,e_2\}$
if and only if the corresponding maximal order has the form 
$\Da_\Lambda=\sbmattrix {\oink_X}{\mathfrak{L}^{-D}}{\mathfrak{L}^{D}}{\oink_X}$, for
some divisor $D$ in that basis. In fact, if $\Lambda=\mathfrak{L}^Be_1\oplus\mathfrak{L}^Ce_2$,
we can choose $D=B-C$. This condition on $\Da_\Lambda$
 is equivalent to $\sbmattrix 1000, \sbmattrix 0001\in\Da_\Lambda(X)$.
More generally, we say that an order $\Ea$ is split if 
$\Ea$ is conjugate to $\sbmattrix {\oink_X}{\mathfrak{L}_1}{\mathfrak{L}_2}{\oink_X}$ for some pair of
invertible bundles $(\mathfrak{L}_1,\mathfrak{L}_2)$, or equivalently,
 if its ring of global sections contain a non-trivial idempotent. A split order is split as a lattice but
the converse is false in general.

 It is well known that two vector bundles $\Lambda$ and
$M$ satisfy $\Da_\Lambda=\Da_M$ if and only if there exists an invertible  vector bundle 
$\mathfrak{L}$ such that $\Lambda=\mathfrak{L}M$, where the product on the right is the scalar product.
It is apparent that every basis splitting $M$ splits also $\mathfrak{L}M$, so the problem of simultaneous splitting
can be studied in terms of  pairs of maximal orders rather than pairs of vector bundles.
For every pair of maximal orders  $\Da_\Lambda$ and $\Da_{\Lambda'}$ we define the Eichler order
$\Ea_{\Lambda,\Lambda'}=\Da_\Lambda\cap\Da_{\Lambda'}$. This is an order of maximal rank in $\matrici_2(K)$.
By the previous discussion, there exists a basis splitting both maximal orders simultaneously if and only
if  $\Ea_{\Lambda,\Lambda'}(X)$ contains a global idempotent, i.e., it is split.
 Characterizing pairs of bundles that can be diagonalized simultaneously is, therefore, equivalent to characterizing
 split Eichler orders.

As we recall in \S2, an order of maximal rank in $\matrici_2(K)$, or more generally a lattice $\Lambda$
 in a vector space $V$, is completely determined  by its set of completions 
$\left\{\Lambda_P\subseteq V_P\Big| P\in|X|\right\}$, where $|X|$ denote the set of closed points of $X$.
Such orders are usually classfied locally through the concept of genus, a maximal set of locally isomorphic  lattices.
In our context, Eichler orders are classified into genera by its level. The level of an Eichler order
 $\Ea_{\Lambda,\Lambda'}$ is an efective divisor $D=D(\Da_\Lambda,\Da_{\Lambda'})$
such that, for every affine open set $U\subseteq X$, we have an isomorphism of $\oink_X(U)$ modules
$$\Da_\Lambda(U)/\Ea_{\Lambda,\Lambda'}(U)\cong
\Da_{\Lambda'}(U)/\Ea_{\Lambda,\Lambda'}(U) \cong\oink_X(U)/\mathfrak{L}^{-D}(U).$$
In terms of this distance, our main results are as follows:

\begin{theorem}\label{t1}
For an arbitrary smooth projective curve $X$ over a finite field, and for any effective divisor $D$, 
there is only a finite number of conjugacy classes of non-split Eichler orders of level $D$
 if and only if $D$ is multiplicity free, i.e., $D=\sum_{i=1}^nP_i$, where $P_1,\dots,P_n$
are different points.
\end{theorem}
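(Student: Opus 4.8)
The plan is to recast the problem in terms of pairs of vector bundles and the product of the local Bruhat--Tits trees, reduce it to an explicit count inside a single tree, and then apply Riemann--Roch. By the equivalence already recorded, $\Ea_{\Lambda,\Lambda'}$ is non-split precisely when no single basis splits both $\Lambda$ and $\Lambda'$. Two preliminary reductions are needed. First, for a fixed conjugacy class $[\Da_\Lambda]$ there are only finitely many conjugacy classes of Eichler orders of level $D$ lying in it: after conjugation $\Da_{\Lambda'}$ is determined by the completions $\Lambda'_P$, which equal $\Lambda_P$ off $\mathrm{supp}(D)$ and sit at tree-distance $v_P(D)$ from $\Lambda_P$ at the finitely many $P\in\mathrm{supp}(D)$ --- a finite ``sphere'' --- while the stabilizer of $\Da_\Lambda$ in $\mathrm{PGL}_2(K)$ is finite. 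Second, up to conjugacy all but finitely many maximal orders are split, i.e. conjugate to $\sbmattrix{\oink_X}{\mathfrak L_1}{\mathfrak L_2}{\oink_X}$: a non-split rank-two bundle is semistable, and there are finitely many of each degree over a finite field, or it is unstable with the destabilizing sub-line-bundle of slope excess at most $2g-2$ (otherwise the defining extension splits), and in either case a twist bounds the degree and leaves finitely many possibilities. Since a non-split $\Lambda$ already forces $\Ea_{\Lambda,\Lambda'}$ to be non-split, these two facts show that only finitely many non-split Eichler orders of level $D$ arise unless both $\Lambda$ and $\Lambda'$ are split; so I may take $\Lambda=\mathfrak L^{B}e_1\oplus\oink_X e_2$, with $[\Da_\Lambda]$ recorded by $[B]\in\mathrm{Pic}(X)/\{\pm1\}$, and (choosing the representative $B$ with $v_P(B)=0$ on $\mathrm{supp}(D)$, by approximation) $\Lambda_P=\oink_P^2$ for $P\in\mathrm{supp}(D)$.

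The core is then a computation in $\mathcal T_P$. For $\deg B>0$ one has $\Da_\Lambda(X)=\sbmattrix{\finitum}{H^0(\mathfrak L^{B})}0{\finitum}$, so the decompositions of $K^2$ coming from its idempotents are exactly $K e_1\oplus K(te_1+e_2)$ with $t\in H^0(\mathfrak L^{B})$, and $\Ea_{\Lambda,\Lambda'}$ is split iff for some single such $t$ the lattice $\Lambda'_P$ lies, for every $P\in\mathrm{supp}(D)$, on the corresponding apartment $\mathcal A_P(\{e_1,te_1+e_2\})$ of $\mathcal T_P$. Now all these apartments, as $t$ runs over $\oink_P$, pass through $\oink_P^2$ and share the ray towards the fixed end $Ke_1$, whose vertex at distance $m_P:=v_P(D)$ is $[\oink_P\oplus\mathfrak p_P^{m_P}]$; along the ray towards the end $K(te_1+e_2)$ the vertex at distance $m_P$ depends on $t$ only modulo $\mathfrak p_P^{m_P}$ and, as $t$ varies, runs over $q_P^{m_P}$ further distinct vertices. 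Hence the vertices of the radius-$m_P$ sphere at $\oink_P^2$ caught by some $t\in\oink_P$ are exactly $1+q_P^{m_P}$ in number, out of the $(q_P+1)q_P^{m_P-1}$ vertices of that sphere.

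If $D$ is multiplicity free then $m_P=1$ for all $P\in\mathrm{supp}(D)$ and $1+q_P=(q_P+1)q_P^{0}$, so every vertex of the radius-one sphere is caught as soon as the map $H^0(\mathfrak L^{B})\to\bigoplus_{P\in\mathrm{supp}(D)}\oink_P/\mathfrak p_P$ is onto, which by Riemann--Roch holds whenever $\deg B>2g-2+\deg D$. Thus for $[B]$ of large degree every $\Lambda'$ at distance $D$ from $\Lambda$ shares a splitting basis with it; non-split Eichler orders of level $D$ occur only for the finitely many classes $[B]$ of bounded degree, and the total is finite. If $D$ is not multiplicity free, pick $P_0$ with $m:=v_{P_0}(D)\ge 2$; then $1+q_{P_0}^{m}<(q_{P_0}+1)q_{P_0}^{m-1}$, so --- for any $B$ with $v_{P_0}(B)=0$ --- some vertex at distance $m$ from $\oink_{P_0}^2$ is caught by no $t\in H^0(\mathfrak L^{B})$. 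Taking $\Lambda'_{P_0}$ to be such a vertex and $\Lambda'_P$ arbitrary at the other points of $\mathrm{supp}(D)$ yields a non-split Eichler order of level $D$; doing this with $B=nQ_0$ for a fixed $Q_0\notin\mathrm{supp}(D)$ and $n=1,2,\dots$ produces infinitely many pairwise non-conjugate such orders, since the degrees of the underlying maximal orders are unbounded.

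The step I expect to be the main obstacle is the tree computation of the second paragraph: identifying which vertices at distance $m_P$ from $\oink_P^2$ are swept out by the apartments $\mathcal A_P(\{e_1,te_1+e_2\})$, showing that this set depends on $t$ only modulo $\mathfrak p_P^{m_P}$ and has size exactly $1+q_P^{m_P}$, and --- hand in hand with it --- pinning down the precise Riemann--Roch threshold on $\deg B$ making the relevant reduction maps surjective. Everything else is bookkeeping with the dictionary between Eichler orders and pairs of bundles.
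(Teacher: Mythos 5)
Your proposal is correct, and it reaches the theorem by a genuinely different route from the paper. For the finiteness direction the paper inducts on the number of points of $D$, taking Serre's finiteness theorem for maximal orders as the base case and running the induction through the classifying graphs $C_{P_{t+1}}(\mathbb{O}_{P_1+\cdots+P_t})$ together with the valency computation in the Remark at the end of \S 3; you instead argue directly in $\mathbb{O}_D$, reducing to $\Lambda=\mathfrak{L}^B\oplus\oink_X$ with $\deg B$ large (your finiteness of non-split rank-two bundles up to twist is exactly what replaces Serre's theorem, and is the one global input that deserves to be isolated as a lemma, most cleanly via the vanishing of $\mathrm{Ext}^1(L_2,L_1)=H^1(\mathfrak{L}^{B_1-B_2})$ when the slope gap exceeds $2g-2$) and then checking that the apartments $\mathcal{A}_P(\{e_1,te_1+e_2\})$ sweep out the entire unit sphere once $H^0(\mathfrak{L}^B)\to\bigoplus_P\oink_P/\mathfrak{p}_P$ is onto. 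The Riemann--Roch input is the same in both proofs --- the paper's condition $\dim_{\finitum}\big(\mathfrak{L}^{-B'}(X)/\mathfrak{L}^{-B'-Q}(X)\big)=\deg(Q)$ is your surjectivity at a single place --- but you dispense with the quotient-graph formalism entirely. For the converse, your count $1+q_P^{m}<(q_P+1)q_P^{m-1}$ for $m\ge 2$ detects exactly the same orders as the paper's Lemma 4.2 (the missed vertices are the length-two paths of Figure 3B, which leave the cusp ray and return), and both arguments separate the resulting classes by the unbounded degree spread of the maximal orders containing them; the difference is that you build the counterexamples directly at level $D$, whereas the paper builds them in $\mathbb{O}_{2P}$ and transports them into $\mathbb{O}_D$ by the containment argument at the end of \S 4. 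Two points to tighten, neither a gap: the finiteness of the stabilizer of $\Da_\Lambda$ in $\mathrm{PGL}_2(K)$ is irrelevant to your first reduction (finiteness of the sphere of radius $D$ already bounds the number of orbits), and the closing non-conjugacy claim should be made via an explicit conjugation invariant, e.g.\ the multiset of isomorphism classes of maximal orders containing $\Ea$, whose degree spreads lie in $[n\deg Q_0-2\deg D,\ n\deg Q_0+2\deg D]$ and hence distinguish all but finitely many values of $n$. Your approach is more self-contained; the paper's buys the explicit classification data (the graphs of Figures 1 and 4 and the exact count in Lemma 4.1) that Theorem 1.2 also needs.
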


When $X=\mathbb{P}^1$ is the projective line, Grothendick-Birkhoff  Theorem \cite{burban}
 states that every vector bundle over $X$ splits as a sum of line bundles.
 In our pressent setting, this implies that every maximal
order splits, whence next result can be considered a partial generalization.

\begin{theorem}\label{t2}
Assume $X=\mathbb{P}^1$ is the projective line. Then the following statements are equivalent for any 
effective divisor $D$:
\begin{enumerate}
\item Every Eichler orders of level $D$ is split,
 \item $D$ is the sum of at most two different points of degree 1. 
\end{enumerate}
\end{theorem}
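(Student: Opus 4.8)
The plan is to translate the assertion that $\Ea$ is split --- which, as recalled above, means that $\Ea(X)$ contains a nontrivial idempotent, equivalently that $\Da_\Lambda$ and $\Da_{\Lambda'}$ admit a common splitting basis --- into a concrete statement about the way a single bundle over $\mathbb{P}^1$ splits relative to an elementary modification, and then to settle that statement by a Riemann--Roch count on $\mathbb{P}^1$ (for $(2)\Rightarrow(1)$) together with an explicit construction (for $\neg(2)\Rightarrow\neg(1)$). First I would dispose of the divisors with a repeated point: by Theorem~\ref{t1} such a $D$ already carries infinitely many, hence at least one, non-split Eichler order of level $D$, so $(1)$ fails for it; since condition $(2)$ in particular forces $D$ to be multiplicity free, I may from now on assume $D=P_1+\dots+P_n$ is multiplicity free. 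For such a $D$ and any $\Ea=\Da_\Lambda\cap\Da_{\Lambda'}$ of level $D$, one may choose the representative $\Lambda'$ with $\Lambda'\subseteq\Lambda$ and $\Lambda/\Lambda'\cong\bigoplus_i\kappa(P_i)$, where $\kappa(P)=\oink_X/P$ is the residue field; then $\Lambda'$ is the elementary modification of $\Lambda$ cut out by a choice of line $\ell_P\subseteq\Lambda\otimes\kappa(P)$ for each $P\in\operatorname{supp}(D)$, and conversely each such choice of lines yields an Eichler order of level $D$ with the given $\Da_\Lambda$.

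The key lemma is then: $\Ea$ is split if and only if $\Lambda$ admits a decomposition $\Lambda=\mathfrak{f}_1\oplus\mathfrak{f}_2$ into invertible subsheaves such that, for every $P\in\operatorname{supp}(D)$, the line $\ell_P$ equals the reduction modulo $P$ of $\mathfrak{f}_1$ or of $\mathfrak{f}_2$. For the ``only if'' part, a common splitting basis $\{f_1,f_2\}$ of $\Lambda$ and $\Lambda'$ is automatically adapted in this sense: at each $P$ the one--dimensional quotient $(\Lambda/\Lambda')\otimes\kappa(P)$ involves only one of the two summands of $\Lambda$, which forces $\ell_P$ to be the reduction of the other. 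For the ``if'' part, if a decomposition is adapted then $\Lambda'$ is split in the same basis --- aligning $\ell_P$ with a summand merely twists that summand down by $P$ --- so the associated projection lies in $\Ea(X)$.

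For $(2)\Rightarrow(1)$ every $P_i$ has degree one, so $\kappa(P_i)=\finitum$, each $\ell_{P_i}$ is an $\finitum$-rational line, and $n\le 2$. By Grothendieck--Birkhoff write $\Lambda\cong\oink(a)\oplus\oink(b)$ with $a\le b$. If $a=b$, the decompositions of $\Lambda$ into invertible subsheaves are parametrized by the pairs of distinct $\finitum$-rational lines, the reduction at each $P$ being the corresponding $\finitum$-rational point of $\mathbb{P}^1(\kappa(P))$; so it suffices to cover the at most two lines $\ell_{P_1},\ell_{P_2}$ by two distinct $\finitum$-rational lines, which is possible since $\mathbb{P}^1(\finitum)$ has at least three points. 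If $a<b$, the degree-$b$ subbundle $\mathfrak{f}_2$ is canonical; the points $P$ at which $\ell_P$ is its reduction cost nothing, and on the remaining set $S\subseteq\operatorname{supp}(D)$ (of cardinality $\le 2$) the obstruction to choosing a complement $\mathfrak{f}_1$ with the prescribed reductions along $S$ lies in $H^1(\oink(b-a)(-\sum_{P\in S}P))$, which vanishes because that bundle has degree $b-a-\#S\ge -1$. Either way an adapted decomposition exists, so $\Ea$ is split.

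For the converse, assume $\neg(2)$ with $D$ multiplicity free and take $\Lambda=\oink_X^2$, so $\Da_\Lambda=\matrici_2(\oink_X)$; by the $a=b$ case of the lemma an adapted decomposition amounts to a pair of distinct $\finitum$-rational lines covering all the $\ell_P$. If some $P_{i_0}$ has degree $\ge 2$, I choose $\ell_{P_{i_0}}$ to be a line of $\Lambda\otimes\kappa(P_{i_0})$ that is not $\finitum$-rational (such a line exists since $\kappa(P_{i_0})\supsetneq\finitum$): then no $\finitum$-rational decomposition can match it, so the resulting Eichler order of level $D$ is not split. Otherwise $n\ge 3$, and choosing $\ell_{P_1},\ell_{P_2},\ell_{P_3}$ to be three distinct $\finitum$-rational lines --- there are at least three --- defeats every two-line decomposition, so again $\Ea$ is not split. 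The main obstacle is the key lemma, in particular its ``only if'' half, where one has to match an arbitrary common splitting basis against the local structure of the modification at each point of $\operatorname{supp}(D)$; once it is available, $(2)\Rightarrow(1)$ is a Riemann--Roch exercise on $\mathbb{P}^1$ and the converse is the construction above, whose content is just that a projective line over a residue field strictly larger than $\finitum$, or the projective line over $\finitum$ itself (which has at least three points), has more directions than two $\finitum$-rational lines can reach.
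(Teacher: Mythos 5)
Your proposal is correct, but it follows a genuinely different route from the paper's. The paper proves both implications by computing quotients of Bruhat--Tits trees: the positive direction rests on Lemma~\ref{l41} (every order of level $P_1+P_2$ is split, read off by comparing the classifying graphs of Figure~1 with the maximal path of orders containing a fixed idempotent), and the negative direction on Lemmas~\ref{l34}, \ref{l41} and \ref{l42}, which produce non-split orders of levels $Q$ with $\deg Q\ge 2$, $P_1+P_2+P_3$, and $2P$ via degree obstructions on the grid of maximal orders containing a given Eichler order. You instead encode an Eichler order of multiplicity-free level as an elementary modification $\Lambda'\subseteq\Lambda$ with $\Lambda/\Lambda'\cong\bigoplus_i\kappa(P_i)$ determined by lines $\ell_P\subseteq\Lambda\otimes\kappa(P)$, and observe that a nontrivial idempotent of $\Ea(X)$ is exactly a decomposition $\Lambda=\mathfrak{f}_1\oplus\mathfrak{f}_2$ whose reductions catch every $\ell_P$; the implication $(2)\Rightarrow(1)$ then reduces to the vanishing of $H^1\big(\mathbb{P}^1,\oink(b-a-\#S)\big)$ for $b-a-\#S\ge -1$, and the converse to the fact that two summands of $\oink_X^2$ only reach $\finitum$-rational directions, of which there are at least three (or, over a residue field larger than $\finitum$, not all of them). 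This is more elementary and self-contained for the multiplicity-free levels and makes the threshold ``two degree-one points'' conceptually transparent, at the cost of not yielding the finer enumerative information the graphs provide (e.g.\ the uniqueness of the non-split class of level $P_1+P_2+P_3$, or Theorem~\ref{t1} itself); note also that for levels with a repeated point you still import Theorem~\ref{t1} (equivalently Lemma~\ref{l42}), which is where the tree machinery silently re-enters, and this is legitimate since the paper's proof of Theorem~\ref{t1} does not use Theorem~\ref{t2}. One small slip of wording: in the ``if'' half of your key lemma, aligning $\ell_P$ with a summand twists the \emph{other} summand down by $P$, so that $\Lambda'=\mathfrak{f}_1(-A)\oplus\mathfrak{f}_2(-B)$ with $A+B=D$; this is what your argument actually uses, and the conclusion that the projection onto $\mathfrak{f}_1$ lies in $\Ea(X)$ is unaffected.
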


The main tool in the sequel is the concept of quotient graph, specifically
quotients of the local Bruhat-Tits tree at some place $P$.
 This idea is due to J.-P. Serre who studied the relation between these
quotients and the structure of the arithmetic groups defining them \cite{trees}. These groups are usually
unit groups of orders, and the corresponding quotient is the S-graph, as defined at the end of \S3.
 In fact, Serre himself computed the S-graph when $X=\mathbb{P}^1$ and $P$ is a place of degree
 $4$ or less, using tools from algebraic geometry.
 An elementary proof of Serre's result was given in \cite{Mason1}, and some partial generalizations
appear in \cite{Mason2} and \cite{Mason6}. These quotients have been used
to study non-congruence subgroups of Drinfeld modular groups, see \cite{Mason3} or \cite{Mason4}.
We ourselves in \cite{cqqgvro} gave a recursive formula to compute these graphs for a place $P$ of arbitrary
degree using the theory of spinor genera, and introduced the concept of C-graph that is
used here for the study of conjugacy classes in a genus.  A close formula for all S-graphs for
maximal orders on $\mathbb{P}^1$ has been given by R. K\H ohl, B. M\H uhlherr and K. Struyve 
in \cite{Kohl} by a different method involving double cosets for simultaneous 
actions on two trees. The S-graph has also been computed for places of degree 1 on
 an elliptic curve \cite{takahashi}.  M. Papikian has studied the S-graph when $\matrici_2(K)$ is replaced
by a division algebra \cite{Papikian}. Although the theory only requires the orders to
be maximal at the specific place $P$, as far as we can tell the present work is the first attempt 
to use these graphs to study Eichler orders, or any non-maximal order, on a function field.

\section{Completions and spinor genera}\label{ndos}

In this section we review the basic facts about spinor genera and spinor class fields of orders.
See \cite{abelianos} for details.

Let $|X|$ be the set of closed points in the smooth projective curve $X$.
 For every such point $P\in |X|$ we let $K_P$ be the completion at $P$ of the function field $K=K(X)$. 
We denote by $\ad=\ad_X$ the adele ring of $X$, i.e., the subring of elements $a=(a_P)_P\in\prod_{P\in|X|}K_P$,
for which all but a finite number of coordinates $a_P$ are integral. For any finite dimensional 
vector space $V$ over $K$ we define its adelization $V_\ad=V\otimes_K\ad\cong\ad^{\mathrm{dim}_KV}$
 and give it the adelic topology \cite[\S IV.1]{weil}.  Note that $K_\ad\cong\ad$ canonically.
We identify the ring of $\ad$-linear maps $\mathrm{End}_\ad(V_\ad)$ with the adelization $\big(\mathrm{End}_K(V)\big)_\ad$.
For any $X$-lattice $\Lambda$, the completion $\Lambda_P$ is defined as the closure of $\Lambda(U)$
in $K_P$ for an arbitrary affine open set $U$ containing $P$. This definition is independent of the choice of $U$.
 Note that for every affine subset $U\subseteq X$,
the $\oink_X(U)$-module $\Lambda(U)$ is an $\oink_X(U)$-lattice as defined in \cite{Om}.
 In this work a lattice always means an $X$-lattice or $X$-bundle as in \S1, while we use 
affine lattice for the classical concept.
 The same observation apply to orders and the notations $\Da$ and $\Da(U)$.
Just as in the affine case, $X$-lattices are determined
 by their local completions $\Lambda_P$, where $P$ runs over the set $|X|$,
 in the following sense:
\begin{enumerate}
\item For any two lattices $\Lambda$ and $\Lambda'$ in a vector space $V$, $\Lambda_P=\Lambda'_P$
 for almost all $P$,
\item if $\Lambda_P=\Lambda'_P$ for all $P$, then $\Lambda=\Lambda'$, and 
\item every family $\{\Lambda''(P)\}_P$ of local lattices satisfying  $\Lambda''(P)=\Lambda_P$ for almost all $P$, 
is the family of completions of a global lattice $\Lambda''$ in $V$.
\end{enumerate}
The same results apply to orders. 
We also define the adelization $\Lambda_\ad=\prod_{P\in|X|}\Lambda_P$, which is open and compact as
a subgroup of $V_\ad$. This applies in particular to the ring of integral ideles $\oink_\ad=(\oink_X)_\ad
\subseteq K_\ad=\ad$.
It follows from property (3) above that every open and compact $\oink_\ad$-sub-module of $V_\ad$ is 
the adelization of a lattice.
 For every $X$-lattice $\Lambda$ and every element $a\in \mathrm{End}_\ad(V_\ad)$,
 the adelic image $L=a\Lambda$ is the unique  $X$-lattice satisfying $L_\ad=a\Lambda_\ad$. 
The adelic image $L$ thus defined inherit all local properties of the original $X$-lattice $\Lambda$. 
For instance,  adelic images of orders and maximal orders, under conjugation, are orders
and maximal orders, respectively. In particular, if we fix a maximal $X$-order $\Da$, all maximal $X$-orders
in $\alge$ have the form $\Da'=a\Da a^{-1}$ for $a\in\alge_\ad^*$. This conjugation must be interpreted as an adelic image. More generally, a maximal set of adelically conjugate orders is called a genus. 
The set of maximal $X$-orders is a genus \cite{Eichler2}.

Locally, there is a well defined distance $d_P$ between maximal orders in $\matrici_2(K_P)$. in fact, 
we have $d_P(\Da_P,\Da'_P)=d$ if, in some basis,  both orders take the form
$$\Da_P=\bmattrix {\oink_P}{\oink_P}{\oink_P}{\oink_P},\qquad
\Da'_P=\bmattrix {\oink_P}{\pi_P^d\oink_P}{\pi_P^{-d}\oink_P}{\oink_P},$$
where $\pi_P$ is a local uniformizing parameter. 
Intersections of orders can be computed locally, in the sense that $\Da_P\cap\Da'_P=(\Da\cap\Da')_P$
for every pair of orders. We define an Eichler order as
the intersection of two maximal orders. This is certainly a local property. 
The level of a local Eichler order is by definition the distance between the
maximal orders defining it. In the local setting, there is a unique pair of maximal orders whose intersection is a
given Eichler order.  Two local Eichler orders are conjugate if and only if their levels coincide.
We conclude that two global Eichler orders $\Ea$ and $\Ea'$ belong to the same genus
if the local levels coincide at all places. Globally, the distance between two maximal orders
$\Da$ and $\Da'$ is defined as the effective divisor
$$D=D(\Da,\Da')=\sum_{P\in|X|}d_P(\Da_P,\Da'_P)P.$$
If $\Da=\Da_\Lambda$ and $\Da'=\Da_{\Lambda'}$, then  $D=D(\Da_\Lambda,\Da_{\Lambda'})$
is, by definition, the level  $\lambda(\Ea_{\Lambda,\Lambda'})$
 of the Eichler order $\Ea_{\Lambda,\Lambda'}$. Two Eichler order
belong to the same genus if and only if they have the same level.
The genus of Eichler orders of level $D$, for any effective divisor $D$, is denoted $\mathbb{O}_D$.

   Two $X$-orders of maximal rank  $\Da$ and $\Da'$ in $\matrici_2(K)$ are in the same spinor genus if 
$\Da'=a\Da a^{-1}$
for some element $a=bc$ where $b\in\matrici_2(K)$ and $c$ is an adelic matrix satisfying $\mathrm{det}(c)=1_\ad$. 
We write $\Da'\in\mathrm{Spin}(\Da)$ in this case. Equivalently,
 two orders $\Da$ and $\Da'$ are in the same spinor genus
if and only if the rings $\Da(U)$ and $\Da'(U)$ are conjugate for every affine open subset $U\subseteq X$.
 The set of spinor genera in a genus is classified via
the spinor class field, which is defined as the class field corresponding to the group 
$K^*H(\Da)\subseteq \ad^*=:J_X$, where
$$H(\Da)=\{\mathrm{det}(a)|a\in\matrici_2(\ad)^*,\  a\Da a^{-1}=\Da\}.$$
This field depends only on the genus $\mathbb{O}=\mathrm{gen}(\Da)$ of $\Da$. We denote it $\Sigma(\mathbb{O})$.

Let $t\mapsto [t,\Sigma/K]$ denote the Artin map on ideles. There exists a distance map
$$\rho:\mathbb{O}\times\mathbb{O}\rightarrow\mathrm{Gal}\big(\Sigma(\mathbb{O})/K\big),$$
defined by $\rho(\Da,\Da')=[\det(a),\Sigma/K]$, for any adelic element $a\in\alge_\ad^*$
satisfying $\Da'=a\Da a^{-1}$. This distance map has the following properties:
\begin{enumerate}
\item $\rho(\Da,\Da'')=\rho(\Da,\Da')\rho(\Da',\Da'')$ for any triple $(\Da,\Da',\Da'')\in\mathbb{O}^3$,
\item $\rho(\Da,\Da')=\mathrm{Id}_{\Sigma(\mathbb{O})}$ if and only if  $\Da'\in\mathrm{Spin}(\Da)$.
\end{enumerate}
In particular, for the genus $\mathbb{O}_0$ of maximal orders, the corresponding
distance $\rho_0:\mathbb{O}_0^2\rightarrow\mathrm{Gal}\big(\Sigma_0/K\big)$,
where $\Sigma_0=\Sigma(\mathbb{O}_0)$, is related to the divisor valued distance by the formula
$\rho_0(\Da_{\Lambda},\Da_{\Lambda'})=[[D(\Lambda,\Lambda'),\Sigma_0/K]]$. 
where $D\mapsto [[D,\Sigma_0/K]]$ is the Artin map on divisors.
 Note however that the distance $\rho_0$ is trivial between isomorphic orders,
which does not hold for  the divisor valued distance.

The spinor class field $\Sigma(D)=\Sigma(\mathbb{O}_D)$, for Eichler orders of level $D=\sum_Pa_PP$,
 is the maximal subfield of $\Sigma_0$, splitting at every place $P$ for which
$a_P$ is odd. The corresponding distance $\rho_D$ is given by the formula
$$\rho_D(\Ea_{\Lambda,\Lambda'},\Ea_{L,L'})=\rho_0(\Da_\Lambda,\Da_L)\Big|_{\Sigma(D)}.$$
The preceding formula follows from \cite[Prop. 6.1]{scffgeo} and the discussion thereafter.

\begin{remark}
When $\matrici_2(K)$ is replaced by another quaternion algebra $\alge$, the condition for two orders to be
in the same spinor genus goes as follows:
\emph{The orders $\Da$ and $\Da'$ are in the same spinor genus if and only if
$\Da(U)$ and $\Da'(U)$ are conjugate for any open set $U$ whose complement has at least one place
splitting $\alge$} (c.f. \cite[\S2]{abelianos}). For a matrix algebra, this is equivalent to $U\neq X$.
\end{remark}

\section{Eichler orders and trees}\label{ebundles}

In all of this work, a graph $G$ is a pair of sets $V=V(G)$ and $E=E(G)$,
 called the vertex set and the edge set, together
with three functions $s,t:E\rightarrow V$ and $r:E\rightarrow E$ called the source, target, and reverse,
satisfying $$r(a)\neq a,\quad r\big(r(a)\big)=a,\textnormal{ and }s\big(r(a)\big)=t(a)$$ for every edge $a$. 
A simplicial map $\gamma:G\rightarrow G'$ between graphs is a pair of functions
$\gamma_V:V(G)\rightarrow V(G')$ and $\gamma_E:E(G)\rightarrow E(G')$
 preserving these maps, and a similar convention applies
to group actions. A group $\Gamma$ acts
on a graph $G$ without inversions if
 $g.a\neq r(a)$ for every edge $a$ and every element $g\in\Gamma$. An action without inversions defines
a quotient graph. Basse-Serre Theory allows us to determine the structure of the group $\Gamma$ in terms of the quotient graphs in some cases, see \cite{trees} for an account of this subject. If the action has inversions we can still define a quotient graph by replacing $G$ by its barycentric subdivision and ignoring the new vertices unless they become 
endpoints in the quotient, where they are called virtual endpoints, see \cite{cqqgvro} or \cite{rouidqo} for details.
The edge joining a vertex and a virtual endpoint is called a half-edge. It can be interpreted as a vertex that
has been "folded in half" by the action. 

The real-line graph $\mathfrak{R}$ is defined by a collection of vertices $\{n_j|j\in\enteri\}$ and a collection of
edges $\{a_j,r(a_j)|j\in\enteri\}$ satisfying $s(a_j)=n_j$ and $t(a_j)=n_{j+1}$.  An integral interval is
a connected subgraph of $\mathfrak{R}$. A finite integral interval $\mathfrak{I}_{k,k'}$ is completely determined by
its first vertex $n_k$ and it last vertex $n_{k'}$. Its length is $k'-k$.
The notations $\mathfrak{I}_{-\infty,k}$,  $\mathfrak{I}_{k,\infty}$, and  
$\mathfrak{I}_{-\infty,\infty}$,  are defined analogously.
 A path in a graph $G$ is an injective simplicial map $\alpha:\mathfrak{I}\rightarrow G$
where $\mathfrak{I}$ is an integral interval. A path is finite of length $k$, or infinite in one or two directions, if
so is the corresponding integral interval. In general, we identify a simplicial map
 $\gamma:J_{k,k'}\rightarrow G$ with any shift, i.e., any map
 $\gamma_t:J_{k+t,k'+t}\rightarrow G$ satisfying $(\gamma_t)_E(a_{r+t})=\gamma_E(a_r)$.
The reverse of a simplicial map $\gamma:J_{0,2}\rightarrow G$  is the map $\gamma':J_{0,2}\rightarrow G$
satisfying $\gamma'_E(a_1)=\gamma_E\Big(r(a_0)\Big)$ and $\gamma'_E(a_0)=\gamma_E\Big(r(a_1)\Big)$.

Locally, maximal orders in $\matrici_2(K_P)$, or equivalently 
homothety classes of lattices in $K_P^2$, are in correspondence with the vertices of the Bruhat-Tits tree
 $\mathfrak{T}(K_P)$ for $\mathrm{PSL}_2(K_P)$ \cite{trees}. The vertices corresponding to two maximal orders
are neighbors if and only if their local distance, as defined in \S2, is $1$. In this setting, local Eichler orders 
$ \Ea$ of level $k$ are in correspondence with finite paths
$\gamma:\mathfrak{I}_{0,k}\rightarrow\mathfrak{T}(K_P)$. In fact, for every pair of vertices in the tree
there is a unique path conecting them. If we denote by $\Da_v$ the maximal order corresponding
to the vertex $v$, the Eichler order corresponding to a path $\gamma$ as above is $\Ea_\gamma=
\Da_{\gamma_V(n_0)}\cap\Da_{\gamma_V(n_k)}$. The
orders $\Da_{\gamma_V(n_i)}$ for $i\in\{1,\dots,k\}$ are exactly the maximal orders containing $\Ea_\gamma$,
or in the notations of \cite{scffgeo}, we write $\mathfrak{S}_0(\Ea_{\gamma})=\gamma(\mathfrak{I}_{0,k})$.
In particular, the pair of local orders whose intersection is a given Eichler order is unique. 

For a global Eichler order $\Ea$ of level $D=\lambda(\Ea)=\sum_P\alpha_PP$, 
the set of maximal orders containing $\Ea$ is in correspondence with the set of vertices in the finite grid
$\mathbb{S}(\Ea)=\prod_P\mathfrak{S}_0(\Ea_P)$, where $P$ runs over the set of places at which 
$\alpha_P>0$.
Any vertex $v$ of this grid corresponds to a global maximal order $\Da_v$ containing $\Ea$ and conversely.
For any pair $(v_1,v_2)$ of opposite vertices of this grid, the corresponding maximal orders satisfy
$\Ea=\Da_{v_1}\cap\Da_{v_2}$, and for all these pairs the divisor valued distance defined in \S2 is $D$. 
These grids are seen as subsets of a suitable product of Bruhat-Tits trees.
 Fix an effective divisor $D=\sum_P\alpha_PP$. 
Any grid of the form $\mathbb{S}(\Ea)$ for $\lambda(\Ea)=D$  is called an actual $D$-grid. 
Note that $\mathrm{PGL}_2(K)$ acts by conjugation on the set of actual $D$-grids. Conjugacy classes
of actual $D$-grids are called admissible $D$-grids. Next result is immediate from the definitions:

\begin{proposition}
For any efective divisor $D$,
the set of conjugacy classes of Eichler orders of level $D$ in $\mathbb{M}_2(K)$
 is in correspondence with the set of admisible $D$-grids. 
\end{proposition}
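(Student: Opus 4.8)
The plan is to prove that the assignment $\Ea\mapsto\mathbb{S}(\Ea)$ descends to the asserted bijection between conjugacy classes of Eichler orders of level $D$ and admissible $D$-grids. First I would check that the map is well defined on conjugacy classes. By the discussion preceding the statement, $\mathbb{S}(\Ea)$ is an actual $D$-grid whenever $\lambda(\Ea)=D$. For $g\in\mathrm{PGL}_2(K)$, conjugation by $g$ is a bijection from the set of maximal orders containing $\Ea$ onto the set of maximal orders containing $g\Ea g^{-1}$; since $g$ acts simplicially on each local tree $\mathfrak{T}(K_P)$ and, by the locality of intersections recalled in \S2, $(g\Ea g^{-1})_P=g_P\Ea_P g_P^{-1}$, this conjugation carries the grid $\mathbb{S}(\Ea)$ simplicially onto $\mathbb{S}(g\Ea g^{-1})=g\,\mathbb{S}(\Ea)\,g^{-1}$ inside the ambient product of Bruhat--Tits trees. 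Hence conjugate Eichler orders of level $D$ are sent to conjugate actual $D$-grids, that is, to the same admissible $D$-grid.

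Surjectivity is immediate from the definitions: an admissible $D$-grid is the $\mathrm{PGL}_2(K)$-orbit of an actual $D$-grid, and an actual $D$-grid is by definition of the form $\mathbb{S}(\Ea)$ for some Eichler order $\Ea$ with $\lambda(\Ea)=D$.

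For injectivity I would recover $\Ea$ from $\mathbb{S}(\Ea)$. For each place $P$ with $\alpha_P>0$, the factor $\mathfrak{S}_0(\Ea_P)$ is, as recalled in \S3, the unique path in $\mathfrak{T}(K_P)$ joining the two vertices of the unique pair of maximal orders whose intersection is $\Ea_P$; since those two endpoint orders both contain $\Ea_P$ and every maximal order containing $\Ea_P$ lies on this path, one has $\Ea_P=\bigcap_{v\in\mathfrak{S}_0(\Ea_P)}\Da_v$, which is intrinsic to the subgraph $\mathfrak{S}_0(\Ea_P)$. At the remaining places $\Ea_P$ is maximal. Hence $\mathbb{S}(\Ea)$ determines the family $\{\Ea_P\}_P$ of completions, and since an $X$-order is determined by its completions (\S2), it determines $\Ea$. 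Consequently, if $\mathbb{S}(\Ea')=g\,\mathbb{S}(\Ea)\,g^{-1}=\mathbb{S}(g\Ea g^{-1})$ for some $g\in\mathrm{PGL}_2(K)$, then $\Ea'=g\Ea g^{-1}$, so the map induced on conjugacy classes is injective.

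As the author notes, the result is essentially a bookkeeping exercise, so no genuine obstacle arises; the one point deserving a little care is the equivariance used in the first step, namely that forming the grid commutes with the conjugation action. This rests on the locality of intersections ($\Da_P\cap\Da'_P=(\Da\cap\Da')_P$) and on the simplicial nature of the $\mathrm{PGL}_2(K_P)$-action on $\mathfrak{T}(K_P)$, both recorded earlier.
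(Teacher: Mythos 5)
Your overall structure (equivariance, surjectivity, then injectivity by recovering $\Ea$ from its grid) is the natural one --- the paper itself declares the statement immediate from the definitions --- but the injectivity step has a genuine gap at the places outside the support of $D$. You recover $\Ea_P$ for each $P$ with $\alpha_P>0$ from the local path $\mathfrak{S}_0(\Ea_P)$, and then dispose of the remaining places with ``$\Ea_P$ is maximal.'' Knowing that $\Ea_P$ is maximal does not tell you \emph{which} maximal order it is, and this matters: two Eichler orders of level $D$ can have identical completions at every place of $\mathrm{supp}(D)$ and still fail to be conjugate (already for $D=0$ the genus of maximal orders over a function field has infinitely many conjugacy classes, so the data away from the support cannot be discarded). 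Hence, if one reads $\mathbb{S}(\Ea)$ merely as a subset of the product of the trees $\mathfrak{T}(K_P)$ over $P\in\mathrm{supp}(D)$ --- which is what your phrase ``the ambient product of Bruhat--Tits trees'' suggests --- the induced map on conjugacy classes is not injective and the argument fails as written.

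The repair is to use the grid as the paper actually sets it up: each vertex $v$ of $\mathbb{S}(\Ea)$ is labelled by a \emph{global} maximal order $\Da_v$ containing $\Ea$, so the grid carries information at all places, and for any pair $(v_1,v_2)$ of opposite corners one has $\Ea=\Da_{v_1}\cap\Da_{v_2}$. This recovers the completions of $\Ea$ everywhere, including the places where $\Ea$ is maximal, after which your injectivity argument (and the rest of the proof, whose equivariance and surjectivity steps are fine) goes through.
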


If we write $D=D'+\alpha_PP$, where $D'$ is supported away from $P$, the actual $D$-grid $\mathbb{S}(\Ea)$
is a paralellotope having two actual $D'$-grids as opposite faces. These are called the $P$-faces of the $D$-grid.
The $P$-faces of an admisible $D$-grid are well defined as admisible grids. This convention is used in all that follows.

Now let $Q\in |X|$ and let $\mathbb{O}$ be a genus of orders of maximal rank that are maximal at $Q$. Let $U$
be the complement of $\{Q\}$ in $X$. Fix an order $\Da\in \mathbb{O}$, and let $\Psi$ be the set of orders
$\Da'\in\mathbb{O}$ satisfying $\Da'(U)=\Da(U)$. An order $\Da'\in\Psi$ is completely determined by the 
local order $\Da'_Q$, and the set of conjugacy classes of these orders is in correspondence with the vertices
of the classifying graph $C_P(\Da)=\Gamma\backslash\mathfrak{T}(K_Q)$, where $\mathfrak{T}(K_Q)$ is the local
Bruhat-Tits tree at $Q$, and $\Gamma$ is the stabilizer of $\Da(U)$ in $\mathrm{PGL}_2(K)$. 
 As orders in the same spinor genera restrict to conjugate orders in every affine subset,
every conjugacy class in a given spinor genus $\mathrm{Spin}(\Da)$ corresponds 
to a unique vertex in $C_P(\Da)$. The orders in $\Psi$ belong two one or two spinor 
genera, according two whether  $[[Q,\Sigma(\mathbb{O})/K]]$ is trivial or not, and in 
the later case the quotient graph is bipartite. The classifying graph $C_P(\mathbb{O})$ is defined
as the disjoint union of the graphs corresponding to all spinor genera or pairs of spinor genera. 
Note that this is a straightforward generalization of the definition in \cite{cqqgvro}.

Neighbors in the Bruhat-Tits tree, interpreted as orders in $\Psi$ have equal completions at all places
outside $\{Q\}$, so by interpreting maximal orders containing an Eichler order as vertices in a grid as before, next
result follows:

\begin{proposition} Let $D$ be an efective divisor supported away from the place $Q$.
The vertices of the classifying graph $C_Q(\mathbb{O}_D)$
 are in correspondence with the admissible $D$-grids, while its pairs of mutually reverse
edges are in correspondence with the set of admissible $(D+Q)$-grids. The borders of an edge are the vertices
corresponding to the $Q$-faces of the grid corresponding to that edge.  
\end{proposition}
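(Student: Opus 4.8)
The plan is to combine the preceding Proposition --- conjugacy classes of Eichler orders of level $E$ are in bijection with admissible $E$-grids --- with the tree dictionary of \S2--\S3, the only genuine computation being the level of the Eichler order attached to an edge.

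Since $D$ is supported away from $Q$, every Eichler order of level $D$ is maximal at $Q$, so the construction in the paragraph preceding the statement applies with $\mathbb{O}=\mathbb{O}_D$: a vertex of $C_Q(\mathbb{O}_D)$ is the conjugacy class of an order $\Ea'\in\Psi$, determined by the lattice class $\Ea'_Q\in\mathfrak{T}(K_Q)$, and taking the disjoint union over all spinor genera (or pairs of spinor genera) of $\mathbb{O}_D$ exhausts every conjugacy class of level-$D$ Eichler orders exactly once. By the preceding Proposition, the vertices of $C_Q(\mathbb{O}_D)$ therefore correspond bijectively to the admissible $D$-grids. For the edges, a pair of mutually reverse edges of $\mathfrak{T}(K_Q)$ is a path $\gamma:\mathfrak{I}_{0,1}\rightarrow\mathfrak{T}(K_Q)$ up to reversal, hence (by the local dictionary of \S2--\S3) a local Eichler order $\Ea_\gamma=\Da_{\gamma_V(n_0)}\cap\Da_{\gamma_V(n_1)}$ of level $1$ at $Q$; glueing $\Ea_\gamma$ at $Q$ to the common restriction to $U$ of the two orders of $\Psi$ attached to the endpoints of the edge produces a global Eichler order $\tilde{\Ea}$ with completion $\Ea'_P$ at every place $P$ of $D$ and $\Ea_\gamma$ at $Q$. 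Since $d_Q$ of two neighboring maximal orders is $1$, the definition of the level gives $\lambda(\tilde{\Ea})=D+Q$. Passing to $\Gamma$-orbits and then to the disjoint union over spinor-genus components turns the set of pairs of mutually reverse edges of $C_Q(\mathbb{O}_D)$ into the set of conjugacy classes of level-$(D+Q)$ Eichler orders, and a second application of the preceding Proposition identifies it with the set of admissible $(D+Q)$-grids.

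To see that the borders of an edge are the $Q$-faces of its grid, note that the endpoints $\gamma_V(n_0)$ and $\gamma_V(n_1)$ correspond, under the vertex bijection, to the two level-$D$ Eichler orders $\Ea'_1$ and $\Ea'_2$ obtained from $\tilde{\Ea}$ by replacing its $Q$-component by each of the two maximal orders containing $\Ea_\gamma$; these are exactly the two extreme members of the grid $\mathbb{S}(\tilde{\Ea})$ in the $Q$-direction, so by the ``prism'' description of \S3 the grids $\mathbb{S}(\Ea'_1)$ and $\mathbb{S}(\Ea'_2)$ are precisely the two $Q$-faces of $\mathbb{S}(\tilde{\Ea})$. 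The step that really needs care is the bookkeeping of conjugacy, not the geometry: one must check that $\Gamma$-conjugacy of vertices and edges of $\mathfrak{T}(K_Q)$, assembled over the disjoint union indexing spinor genera, matches $\mathrm{PGL}_2(K)$-conjugacy of the associated global orders, which is exactly what the spinor-genus material of \S2 and the paragraph preceding the statement supply (using that $\mathrm{PGL}_2(K)$-conjugation preserves spinor genera and that orders in a common spinor genus restrict to conjugate orders on every affine open set). One should also record the degenerate case where $\Gamma$ inverts the edge: then $\Ea'_1$ and $\Ea'_2$ are conjugate, the pair of reverse edges projects to a half-edge with a single real border and a virtual endpoint, and the corresponding admissible $(D+Q)$-grid has its two $Q$-faces equal as admissible $D$-grids, in agreement with the conventions fixed at the start of \S3.
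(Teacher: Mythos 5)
Your argument is correct and follows exactly the route the paper intends: the paper states this proposition as an immediate consequence of the preceding discussion (neighbors in $\mathfrak{T}(K_Q)$ are orders in $\Psi$ agreeing outside $Q$, their intersection is an Eichler order of level $D+Q$, and conjugacy classes are matched to admissible grids via the previous proposition), and your write-up simply makes that chain of identifications explicit. Your added care about spinor-genus bookkeeping and the half-edge/inversion case is consistent with the conventions fixed in \S2--\S3.
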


For any divisor $D=\sum_P\alpha_PP$, its absolute value is defined by $|D|=\sum_P|\alpha_P|P$.

\begin{lemma}
Let $\Ea$ be a split Eichler order of level $D$
that can be written as the intersection of two maximal orders isomorphic
to $\Da_B$ and $\Da_{B'}$, then there are divisors $B_0$ and $B'_0$ such that:
\begin{enumerate}
\item $B_0$ is linearly equivalent to $B$ or $-B$,
\item $B'_0$ is linearly equivalent to $B'$ or $-B'$, and
\item $|B_0-B'_0|=D$.
\end{enumerate} 
\end{lemma}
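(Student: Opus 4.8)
The plan is to exploit the idempotent built into a split Eichler order in order to diagonalize \emph{both} maximal orders defining $\Ea$ in a single common basis, and then to read off $B_0$ and $B_0'$ from that basis. Write $\Ea=\Da^{(1)}\cap\Da^{(2)}$ with $\Da^{(1)}\cong\Da_B$ and $\Da^{(2)}\cong\Da_{B'}$, and choose bundles $\Lambda_1,\Lambda_2$ in $K^2$ with $\Da_{\Lambda_i}=\Da^{(i)}$ (possible since every maximal order has this form). Since $\Ea$ is split, $\Ea(X)$ contains a non-trivial idempotent $e$; set $V_1=eK^2$ and $V_2=(1-e)K^2$, so $K^2=V_1\oplus V_2$ with each $V_j$ one-dimensional over $K$. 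The key point of this first step is that $e\in\Ea(X)\subseteq\Da^{(i)}(X)$ for $i=1,2$, so the \emph{same} idempotent sits inside both defining orders; this is exactly where splitness of $\Ea$, rather than of the individual $\Da^{(i)}$, is used, and it is the one genuinely non-formal ingredient.

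Next I would diagonalize each $\Lambda_i$ in the basis adapted to $V_1\oplus V_2$. As $e$ and $1-e$ lie in $\Da^{(i)}(X)$, both preserve the sheaf $\Lambda_i$, so $\Lambda_i=(\Lambda_i\cap V_1)\oplus(\Lambda_i\cap V_2)$ as $\oink_X$-modules, each summand being an invertible bundle inside $V_j\cong K$. Fixing non-zero $v_1\in V_1$, $v_2\in V_2$ and working in $\{v_1,v_2\}$, we get $e=\sbmattrix1000$, $\Lambda_i=\mathfrak{L}^{C_i}v_1\oplus\mathfrak{L}^{C_i'}v_2$ for divisors $C_i,C_i'$, and therefore $\Da^{(i)}=\Da_{\Lambda_i}=\sbmattrix{\oink_X}{\mathfrak{L}^{-F_i}}{\mathfrak{L}^{F_i}}{\oink_X}$ in this basis, with $F_i:=C_i-C_i'$. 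Thus $\Da^{(1)}$ and $\Da^{(2)}$ are \emph{simultaneously} standard split maximal orders, attached respectively to the divisors $F_1$ and $F_2$. Since $\Da_{\Lambda}$ and $\Da_{\Lambda'}$ are conjugate in $\matrici_2(K)$ precisely when $\Lambda$ is isomorphic to a twist of $\Lambda'$, conjugating such a standard split order corresponds to twisting and/or interchanging the two line-bundle summands; hence $\Da^{(1)}\cong\Da_B$ gives, by uniqueness of the decomposition of a vector bundle into line bundles (Grothendieck--Birkhoff when $X=\mathbb{P}^1$, Krull--Schmidt in general), that $F_1$ is linearly equivalent to $B$ or to $-B$, and likewise $F_2$ to $B'$ or $-B'$.

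It now suffices to take $B_0=F_1$, $B_0'=F_2$ and to check (c). Using $\mathfrak{L}^{G}\cap\mathfrak{L}^{G'}=\mathfrak{L}^{\min(G,G')}$ on the explicit matrices above, $\Ea=\Da^{(1)}\cap\Da^{(2)}=\sbmattrix{\oink_X}{\mathfrak{L}^{-\max(F_1,F_2)}}{\mathfrak{L}^{\min(F_1,F_2)}}{\oink_X}$, and a place-by-place comparison with the local normal form defining $d_P$ in \S2 --- equivalently, with the isomorphism $\Da^{(i)}(U)/\Ea(U)\cong\oink_X(U)/\mathfrak{L}^{-D}(U)$ --- shows that the level of $\Ea$, namely $D$, equals $|F_1-F_2|=|B_0-B_0'|$. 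The hard part is really just the first step: once a \emph{common} global idempotent is in hand, everything reduces to the conjugacy classification of split maximal orders and to routine bookkeeping with $\mathfrak{L}^{B}\mathfrak{L}^{D}=\mathfrak{L}^{B+D}$ and the local distance formula.
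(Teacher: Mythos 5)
Your proof is correct and follows essentially the same route as the paper's: both arguments reduce to the observation that the global idempotent of $\Ea$ lies in each of the two defining maximal orders, forcing them into the standard split form $\Da_{F_1}$, $\Da_{F_2}$ in a common basis, after which the conjugacy classification of split maximal orders gives $F_i$ linearly equivalent to $\pm B$ (resp.\ $\pm B'$) and the explicit intersection gives level $|F_1-F_2|$. The only difference is presentational: the paper cites \cite[Prop.~4.1]{cqqgvro} for the classification step, where you supply the Krull--Schmidt argument directly.
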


\begin{proof}
Recall that the set of local maximal orders containing a given idempotent, say $\sbmattrix 1000$ is a maximal
path in the corresponding local tree \cite[Cor. 4.3]{Eichler2}. 
Globally, the set of such orders coinciding outside some finite set $S$ of places,
is in correspondence with an infinite grid whose dimension is the cardinality of $S$. Algebraically, they can be described
as the orders of the form $\Da_B=\sbmattrix {\oink_X}{\mathfrak{L}^B}{\mathfrak{L}^{-B}}{\oink_X}$ where 
$B$ is a divisor supported in $S$. It follows that the Eichler orders containing $\sbmattrix 1000$ 
as a global section are the orders 
of the form $\Ea[B,B']=\sbmattrix {\oink_X}{\mathfrak{L}^{-B'}}{\mathfrak{L}^{-B}}{\oink_X}$ where 
$B+B'$ is an effective divisor. In fact, if we define $$G=\sum_P\mathrm{min}\{\alpha_P,\alpha'_P\}P, \qquad
M=\sum_P\mathrm{max}\{\alpha_P,\alpha'_P\}P,$$
where $B=\sum_P\alpha_PP$ and $B'=\sum_P\alpha'_PP$ ,
we have $\Da_B\cap\Da_{B'}=\Ea[M,-G]$, which is an Eichler order of level $M-G$. We
 note that all pairs $(v'',v''')$ of opposite corners, of the grid corresponding to $\Ea[M,-G]$, 
satisfy $\Da_{v''}=\Da_{B''}$ and $\Da_{v'''}=\Da_{B'''}$, where $|B''-B'''|=M-G$. Now the result follows from
\cite[Prop 4.1]{cqqgvro} and the discussion preceding it.
\end{proof}

\begin{lemma}\label{l34}
Let $Q\in |\mathbb{P}^1|$ be a point of degree 2 or larger. Then there exists non-split orders in 
$\mathbb{O}_{Q}$.
\end{lemma}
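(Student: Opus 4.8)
The plan is to realize $\mathbb{O}_Q$, the genus of Eichler orders of level $Q$ with $\deg Q\ge 2$, via the classifying graph $C_Q(\mathbb{O}_0)$ and exhibit a vertex that is not split. By Proposition (the one preceding this lemma, with $D=0$), the vertices of $C_Q(\mathbb{O}_0)$ correspond to admissible $0$-grids — i.e. conjugacy classes of maximal orders on $\mathbb{P}^1$, of which there is exactly one by Grothendieck--Birkhoff — and the pairs of mutually reverse edges correspond to admissible $Q$-grids, i.e. conjugacy classes of Eichler orders of level $Q$. So $C_Q(\mathbb{O}_0)$ has a single vertex, and the Eichler orders in $\mathbb{O}_Q$ are in bijection with the (reverse-pairs of) edges, which are loops and half-edges at that vertex. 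Concretely, these are the orbits of $\Gamma=\mathrm{PGL}_2(\mathbb{F}_q[t])$ (the stabilizer of the standard maximal order restricted to the affine line) acting on the unordered neighbor pairs of the origin in the local Bruhat--Tits tree $\mathfrak{T}(K_Q)$.

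First I would count. Since $\deg Q=d\ge 2$, the residue field at $Q$ has order $q^d$, so the origin vertex of $\mathfrak{T}(K_Q)$ has $q^d+1$ neighbors, giving $q^d+1$ ordered edges out of the origin. The key point, coming from Serre's computation of the quotient $\Gamma\backslash\mathfrak{T}(K_Q)$ for $\mathbb{P}^1$ (and recalled in the cited literature \cite{trees}, \cite{cqqgvro}, \cite{Kohl}), is that this quotient is a half-line (a "ray") with possibly some decoration, so the stabilizer $\Gamma_{v_0}$ of the origin $v_0$ — which is $\mathrm{PGL}_2(\mathbb{F}_q)$ since the constant field is $\mathbb{F}_q$ — has only a bounded number of orbits on the neighbors of $v_0$, independent of $d$. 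In fact $\mathrm{PGL}_2(\mathbb{F}_q)$ acting on the $q^d+1$ neighbors of $v_0$ has at most $|\mathrm{PGL}_2(\mathbb{F}_q)| = q(q^2-1)$ of them in the orbit of the neighbor lying along the quotient ray; for $d$ large this is far fewer than $q^d+1$, so there must be more than one orbit. More carefully: a neighbor of $v_0$ is split (as an Eichler order of level $Q$, i.e. lies on the ray issuing from the split vertex) iff it is $\Gamma_{v_0}$-equivalent to the distinguished neighbor on the ray; since $\Gamma$ acts on $C_Q$ producing a \emph{ray} and not a single vertex with only loops, there is at least one neighbor (equivalently one edge of $C_Q(\mathbb{O}_0)$) whose other endpoint is genuinely a second vertex of the ray, and that edge corresponds to a non-split Eichler order of level $Q$.

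The cleanest route avoids invoking Serre's full computation and argues directly. I would show: (i) a split Eichler order of level $Q$ must, by the first Lemma above, be the intersection of two maximal orders isomorphic to $\Da_B$ and $\Da_{B'}$ with $|B_0-B_0'|=Q$ for $B_0,B_0'$ linearly equivalent to $\pm B,\pm B'$; on $\mathbb{P}^1$, since $\deg Q=d\ge 2$ and $Q$ is a single closed point, there is \emph{no} effective divisor of degree $d$ on $\mathbb{P}^1$ that is a sum of distinct points summing to $Q$ except $Q$ itself, but more to the point $|B_0-B_0'|=Q$ forces $B_0-B_0'$ to be $\pm Q$ or $Q$ with the right signs — counting, the split edges at $v_0$ correspond to pairs $(B_0,B_0')$ with $B_0-B_0'=Q$, and modulo the $\Gamma$-action (linear equivalence plus the $\mathrm{PGL}_2$-action) these form a \emph{single} class; (ii) the number of Eichler orders of level $Q$, i.e. the number of $\Gamma$-orbits of geodesic segments of length $1$ from $v_0$ in $\mathfrak{T}(K_Q)$, is the number of $\mathrm{PGL}_2(\mathbb{F}_q)$-orbits on $\mathbb{P}^1(\mathbb{F}_{q^d})$, which for $d\ge 2$ is strictly greater than $1$ (for instance the $\mathbb{F}_q$-rational points form one orbit and the non-rational ones another, both nonempty when $d\ge 2$). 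Comparing the counts in (i) and (ii) gives a non-split order. I expect the main obstacle to be pinning down step (i) precisely: identifying which edges of $C_Q(\mathbb{O}_0)$ are "split" requires matching the combinatorial description (geodesic of length $1$ from the origin, up to $\Gamma$) with the algebraic characterization via the first Lemma and checking there is exactly one such $\Gamma$-class, so that the surplus orbits in (ii) are all non-split.
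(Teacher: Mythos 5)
Your opening claim --- that $C_Q(\mathbb{O}_0)$ has a single vertex because Grothendieck--Birkhoff gives one conjugacy class of maximal orders --- is false, and it distorts the whole first paragraph. Grothendieck--Birkhoff says every rank-two bundle on $\mathbb{P}^1$ splits, so every maximal order is conjugate to some $\Da_B$; but $\Da_B\cong\Da_{B'}$ only when $B'$ is linearly equivalent to $\pm B$, so the conjugacy classes are $[\Da_{nP}]$ for $n=0,1,2,\dots$ (with $P$ of degree one): infinitely many vertices, which is precisely where the paper's proof starts. Your heuristic that an edge reaching a ``genuinely different vertex of the ray'' is non-split is also backwards: the edges along the rays $[\Da_{nP}]\to[\Da_{(n+d)P}]$ are exactly the ones realized by split orders (the endpoints $\Da_B$ and $\Da_{B-Q}$ of $\Ea[B,Q-B]$ have degrees differing by $d=\deg Q$); the non-split candidates are edges joining $[\Da_{nP}]$ to $[\Da_{mP}]$ with $n\pm m\not\equiv 0\pmod d$, or the loop at $[\Da_0]$ when $d=2$, and this, combined with the degree obstruction from the preceding lemma, is the paper's actual argument. (Incidentally $\Gamma$ is not $\mathrm{PGL}_2(\finitum[t])$ for $\deg Q\ge 2$; the affine ring is $\oink_X(X\setminus\{Q\})$, not a polynomial ring.)

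The orbit-counting idea in your ``cleanest route'' is nonetheless a viable and genuinely different argument, but the step you flag as the obstacle is a real gap, and your proposed fix is the wrong one. You do not need ``exactly one $\Gamma$-class of split edges'' (one conjugacy class of split orders would not by itself prevent two distinct $\Gamma_{v_0}$-orbits of neighbors from both yielding split orders, via a conjugation that moves $v_0$). What you need is that the neighbors $w$ of $v_0=\Da_0$ for which $\Da_0\cap\Da_w$ is split form \emph{exactly one} $\mathrm{PGL}_2(\finitum)$-orbit, namely $\mathbb{P}^1(\finitum)\subset\mathbb{P}^1\big(\finitum(Q)\big)$. To get this, argue: if $\Da_0\cap\Da_w$ is split it is conjugate to $\Ea[0,Q]=\Da_0\cap\Da_{-Q}$; the conjugating element must carry the unique pair $\{\Da_0,\Da_w\}$ of maximal orders containing it to $\{\Da_0,\Da_{-Q}\}$, and since $\Da_w\cong\Da_{dP}\not\cong\Da_0$ it must fix $\Da_0$, hence lies in the normalizer $K^*\mathrm{GL}_2(\finitum)$ of $\Da_0$ (here the triviality of $\mathrm{Pic}^0(\mathbb{P}^1)$ is used); therefore $w$ lies in the $\mathrm{PGL}_2(\finitum)$-orbit of the standard rational neighbor. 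Then $q+1=|\mathbb{P}^1(\finitum)|<q^d+1$ for $d\ge 2$ produces a non-split order. With that repair your proof is complete, and it has the merit of treating $d=2$ and $d>2$ uniformly, avoiding the appeal to the explicit quotient graph of \cite{cqqgvro} that the paper needs for $d=2$.
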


\begin{proof}
The conjugacy classes of maximal orders in $\mathbb{M}_2(K)$ are the classes $[\Da_{nP}]$ for $n=0,1,2,\dots$
and $P$ a place of degree 1 \cite[\S II.2.3]{trees}. Recall that $Q$ is, as a divisor, linearly equivalent to $dP$ where 
$d=\mathop{\mathrm{deg}}(Q)$.
We need to recall some properties of the classifying graph $C_Q(\mathbb{O}_0)$ of maximal orders:
\begin{enumerate}
\item For any $n>0$, the orders $\Da_{nP},\Da_{(n+d)P},\Da_{(n+2d)P},\dots$ are located in an infinite ray
(c.f. \cite[Th. 1.2]{cqqgvro}).
\item The graph has one connected component if $d$ is odd and two if $d$ is even (c.f. \cite[Th. 1.3]{cqqgvro}).
\end{enumerate} 
When $d>2$, there must exists an edge connecting two orders isomorphic to $\Da_{nP}$ and $\Da_{mP}$,
where neither $n+m$ nor $n-m$ is divisible by $d$.  In particular, if the corresponding Eichler order were split, 
there should exist  two divisors $B_0$ and $B'_0$ of degrees $\pm n$ and $\pm m$
satisfying $|B_0-B'_0|=Q$. This can only mean $B_0-B'_0=\pm Q$, which is not possible by degree considerations,
 and the result follows from the preceding lemma. Assume now $d=2$.
We learn from \cite[Fig. 7]{cqqgvro} the there is an edge in $C_Q(\mathbb{O}_0)$ connecting the class
$[\Da_0]$ to itself, i.e., there is an edge in $\mathfrak{T}(K_Q)$ connecting two orders
isomorphic to $\Da_0$. In particular, if the corresponding Eichler order were split, there should exist 
two divisors $B_0$ and $B'_0$ of degree $0$
satisfying $|B_0-B'_0|=Q$, and the result follows as before. 
\end{proof}

\begin{remark}
We let $\Gamma_0=K^*\Da(U)^*/K^*\subseteq\Gamma$, and let 
$S_Q(\Da)=\Gamma_0\backslash\mathfrak{T}(K_Q)$ be the S-graph of $\Da$ as in \cite{cqqgvro}.
 Since $\Gamma_0$ is a normal subgroup of
$\Gamma$, the classifying graph is a quotient of the S-graph. This can be used as a tool to compute classifying 
graphs since the valency, in the S-graph, of the vertex corresponding to an order $\Da'$ is the number of orbits
of the natural action of the group $\Da'(X)^*$ as moebius transformations acting over the set of rational points
of the projective line over the corresponding residue field,  which we identify with the set of $Q$-neighbors 
of $\Da$. This has a particularly simple description for a split Eichler order $\Da=\Ea[B,B']$:
\begin{quote}
Assume that $B+B'$ is effective and non-zero. Then either $B$ or $B'$ has positive degree, say $B$ to fix ideas.
Then $\mathfrak{L}^{-B}(X)=\{0\}$. A  simple computation shows that
$\Ea[B,B'](X)=\bmattrix {\finitum}{\mathfrak{L}^{-B'}(X)}{0}{\finitum}$, and any element whose only
eigenvalue is $1$ acts by conjugation as an aditive map of the form $t\mapsto t+a$ on the projective line
$\mathbb{P}^1\big(\finitum(Q)\big)$.\end{quote}
 We conclude that any vertex in the
S-graph $S_Q(\Da)$ corresponding to such an order has valency $2$ as soon as
$$\dim_{\finitum}\Big(\mathfrak{L}^{-B'}(X)/\mathfrak{L}^{-B'-Q}(X)\Big)=[\finitum(Q):\finitum]=\deg(Q),$$
while its valency is $2+|\finitum(Q)^*/\finitum^*|$ if the preceding dimension is $0$. In particular, if $Q$ is
a point of degree $1$, the valency of a split order can be either $2$ or $3$. It is a consequence of Riemann-Roch' 
Theorem that the valency is always $2$ for large values of $\deg(-B')$.
\end{remark}

\section{Proof of the main results}

We begin this section by proving a few key lemmas.

\begin{lemma}\label{l41}
Let $P_1,P_2,P_3\in |\mathbb{P}^1|$ be three points of degree 1. Then every order in  $\mathbb{O}_{P_1+P_2}$
is split, but there exists a unique conjugacy class of non-split orders in $\mathbb{O}_{P_1+P_2+P_3}$.
\end{lemma}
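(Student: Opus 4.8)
The plan is to use the classifying-graph machinery of \S3 together with the split-Eichler-order criterion of the previous lemma. By Proposition~2 (the one relating $C_Q(\mathbb{O}_D)$ to grids) and the relation $\rho_D(\Ea,\Ea')=\rho_0(\Da_\Lambda,\Da_L)|_{\Sigma(D)}$, it suffices to identify the relevant classifying graphs on $\mathbb{P}^1$. Since $\mathbb{P}^1$ has trivial class group, all divisors of a given degree are linearly equivalent, so a maximal order $\Da_B$ depends only on $\deg(B)$; write $\Da_n$ for $\Da_{nP}$, so the maximal orders are $\Da_0,\Da_1,\Da_2,\dots$, located along a single infinite ray in $C_{P_3}(\mathbb{O}_0)$ (this is the $d=1$ case of the facts quoted in Lemma~\ref{l34}).

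For the first assertion, I would argue directly via the previous lemma. An Eichler order $\Ea$ of level $P_1+P_2$ with $P_1\neq P_2$ is the intersection of two maximal orders $\Da_B$, $\Da_{B'}$; on $\mathbb{P}^1$ we may take $B=aP$, $B'=bP$ with $a=\deg B\ge 0$, $b=\deg B'\ge 0$. The local distance data force $|a-b|\le 2$ and more precisely the divisor-valued distance is $P_1+P_2$. I then want to produce $B_0\sim\pm B$, $B'_0\sim\pm B'$ with $|B_0-B'_0|=P_1+P_2$: taking $B_0$ of degree $\pm a$ and $B'_0$ of degree $\mp b$ or $\pm b$ suitably, the condition $|B_0-B'_0|=P_1+P_2$ becomes solvable precisely because $P_1+P_2$ is a sum of two distinct degree-one points and we have two independent sign choices — e.g.\ $B_0-B'_0$ can be arranged to equal $P_1+P_2$, $P_1-P_2$, or $-P_1-P_2$ as needed to match the degree. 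Concretely I expect the only cases are $\{a,b\}=\{0,2\}$ (take $B_0=0$, $B'_0=-(P_1+P_2)$) and $a=b=1$ (take $B_0=P_1$, $B'_0=-P_2$, so $B_0-B'_0=P_1+P_2$; note $B_0\sim B$ and $B'_0\sim -B'$ since both have degree $\pm1$). In each case the global section $\sbmattrix 1000$ lies in $\Ea[B_0,B'_0](X)$, so $\Ea$ is split. This reverses the computation in the proof of Lemma~\ref{l34}; the key point distinguishing this from the $\deg Q\ge 2$ situation is exactly that the obstruction "$B_0-B'_0=\pm Q$ is impossible by degree" disappears when $Q$ is replaced by a sum of two distinct degree-one points.

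For the second assertion I would work inside $C_{P_3}(\mathbb{O}_{P_1+P_2})$: by Proposition~2 applied with $Q=P_3$ and $D=P_1+P_2$, its vertices are the admissible $(P_1+P_2)$-grids (all split, by part one) and its pairs of reverse edges are the admissible $(P_1+P_2+P_3)$-grids, i.e.\ the conjugacy classes of Eichler orders of level $P_1+P_2+P_3$. So I must count edges of $C_{P_3}(\mathbb{O}_{P_1+P_2})$ and determine which correspond to non-split orders. The vertices, being split orders $\Ea[B,B']$ with $B+B'$ effective of degree $2$ supported away from $P_3$, are controlled by the valency formula in the Remark after Lemma~\ref{l34}: for $\deg Q=\deg P_3=1$ the valency is $2$ or $3$ according to whether $\dim_{\finitum}\big(\mathfrak{L}^{-B'}(X)/\mathfrak{L}^{-B'-P_3}(X)\big)$ is $1$ or $0$. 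Using Riemann–Roch on $\mathbb{P}^1$ ($\ell(\mathfrak{L}^{E})=\deg E+1$ for $\deg E\ge 0$, and $0$ for $\deg E<0$), I would check that there is exactly one vertex of valency $3$ — the "extremal" one with $\{B,B'\}$ of degrees $\{0,2\}$ where $\mathfrak{L}^{-B'-P_3}(X)=\mathfrak{L}^{-B'}(X)=\{0\}$ — and all others have valency $2$; hence the graph is an infinite ray with one vertex carrying an extra loop (or half-edge). The unique non-split order of level $P_1+P_2+P_3$ then corresponds to the unique edge of this graph that is \emph{not} part of a split configuration: applying the previous lemma, an edge/Eichler order $\Ea$ of level $P_1+P_2+P_3$ is split iff one can solve $|B_0-B'_0|=P_1+P_2+P_3$ with $\deg B_0=\pm\deg B$, $\deg B'_0=\pm\deg B'$ for the degrees $(\deg B,\deg B')$ attached to that edge's endpoints; the self-edge at the valency-$3$ vertex $[\Da_0]$ joins two orders isomorphic to $\Da_0$ (degrees $0,0$), forcing $B_0-B'_0=\pm(P_1+P_2+P_3)$ of degree $0$ — impossible — exactly as in the $d=2$ case of Lemma~\ref{l34}, so that order is non-split, while every other edge joins orders whose degrees allow a solution and is therefore split.

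The main obstacle I anticipate is the bookkeeping in the second part: pinning down \emph{precisely} which edges of $C_{P_3}(\mathbb{O}_{P_1+P_2})$ exist and showing there is exactly one non-split class rather than zero or several. This requires (i) an accurate description of the graph's shape from the valency computation and the connectivity fact (one component since $\deg P_3=1$ is odd), and (ii) matching each edge to the degree pair of its endpoints and running the "$|B_0-B'_0|=P_1+P_2+P_3$ is solvable iff the degrees are not both forced to be $0$" dichotomy. I expect the extra loop/half-edge at the extremal vertex is the only source of a non-split class, and that uniqueness follows because that loop is unique; but verifying that no \emph{other} edge — in particular none of the half-edges that might arise from inversions in the $\mathrm{PGL}_2(K)$-action — yields a second non-split class is the delicate step, and I would handle it by the explicit $\mathbb{P}^1$ Riemann–Roch estimates rather than appealing to general position.
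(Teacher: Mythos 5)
There is a genuine gap, and it is fatal for the second assertion. The unnumbered lemma of \S 3 that you invoke is a one-way implication: \emph{if} $\Ea$ is split, \emph{then} divisors $B_0\sim\pm B$, $B'_0\sim\pm B'$ with $|B_0-B'_0|=D$ exist. You use it throughout as an equivalence (``$\Ea$ is split iff one can solve $|B_0-B'_0|=D$''), and the converse is false --- indeed the present lemma is the counterexample. The unique non-split order of level $P_1+P_2+P_3$ is the intersection of two maximal orders lying in the classes $[\Da_0]$ and $[\Da_{P_1}]$ (every pair of opposite corners of its $2\times2\times2$ grid realizes this pair of classes; see Figure 2), and for degrees $0$ and $1$ the equation $|B_0-B'_0|=P_1+P_2+P_3$ \emph{is} solvable (e.g.\ $B_0=2P_1-P_2-P_3\sim 0$, $B'_0=P_1$, so $B_0-B'_0=P_1-P_2-P_3$); moreover the split order $\Ea[P_1+P_2,P_3]$ realizes the very same corner data via the opposite pair $(\Da_{P_1-P_3},\Da_{P_2})$. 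So the level together with the isomorphism classes of a defining pair of maximal orders does not determine splitness, your dichotomy would detect no non-split class at all, and the uniqueness argument collapses. The paper instead proves non-splitness by a configuration argument on the whole grid: assuming a global idempotent $\sbmattrix 1000$ and normalizing one corner to $\Da_0$, no choice of signs $\Da_{\pm P_i}$ at the neighbouring corners reproduces the pattern of classes forced on all eight vertices.

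Two further points. First, your valency computation is inverted: on $\mathbb{P}^1$ with $\deg P_3=1$, the valency of $\Ea[B,B']$ is $3$ exactly when $\mathfrak{L}^{-B'}(X)=\mathfrak{L}^{-B'-P_3}(X)$, i.e.\ when $\deg B'\geq 1$; for $B+B'=P_1+P_2$ this singles out $\Ea[P_1,P_2]$ (degrees $(1,1)$), not the degrees-$\{0,2\}$ vertex, and the extra edge there is a half-edge produced by an inversion, not a loop. Second, even your proof of the first assertion has the same logical gap in milder form: exhibiting a split order $\Ea[B_0,B'_0]$ with the same level and the same isomorphism classes of defining maximal orders as a given $\Ea$ does not show that $\Ea$ is conjugate to it (your enumeration of the possible degree pairs is also incomplete, since $\{n,n+2\}$ occurs for every $n\geq 0$). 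The paper's route --- showing that every edge of the classifying graph $C_{P_2}(\mathbb{O}_{P_1})$ is represented on the single maximal path of orders containing the fixed idempotent $\sbmattrix 1000$ --- is what actually closes this for level $P_1+P_2$, and the failure of that representability for the extra half-edge at level $P_1+P_2+P_3$ is precisely where the unique non-split class appears.
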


\begin{proof}
Recall as before that the maximal orders containing a fixed non-trivial idempotent,
i.e., a conjugate of  $\sbmattrix 1000$, are the vertices of a maximal path \cite[Cor. 4.3]{Eichler2}.
 On the other hand, the 
classifying graph (or the S-graph), for maximal orders at a point $P_1$ of degree $1$ is as shown in Figure 1A
(c.f. \cite[\S II.2.3]{trees}, or \cite[Fig. 1]{cqqgvro}).  This is covered twice by the maximal path in Figure 1B.
Every edge of this graph corresponds to a conjugacy class of orders in $\mathbb{O}_{P_1}$ and
conversely, whence every order in this genus is split. In fact, all classes in this genus are represented in the
set $$\{\Ea[P_1,0],\Ea[2P_1,-P_1],\Ea[3P_1,-2P_1],\dots\}.$$
The corresponding quotient graph is easily drawn from the preceding discussion as we can see that 
all vertices have valency $2$ in the S-graph at a place $P_2\neq P_1$ of degree 1, 
thus we obtain the graph in Figure 1C.
Note that $\Ea[(n+1)P_1,-nP_1]\cong \Ea[nP_2+P_1,-nP_2]$.
 The edges in this graph correspond to the orders in  
$\mathbb{O}_{P_1+P_2}$. Again, each of these edges in the quotient graph has a representative in
the maximal path corresponding to the global idempotent  $\sbmattrix 1000$. We conclude that each order in this
genus is split. Representatives for all these orders are in the set
$$\{\Ea[P_1,P_2],\Ea[P_1+P_2,0],\Ea[P_1+2P_2,-P_2],\Ea[P_1+3P_2,-2P_2],\dots\}.$$
In particular, conjugacy classes of such orders can be characterize by the conjugacy classes of 
the four maximal orders containing each order in this genus. For example, the maximal orders containing
the order $\Ea[P_1,P_2]$ have the form $\Da_B$ where $B\leq P_1$ and $-B\leq P_2$,
so $B\in\{ 0,P_1,-P_2,P_1-P_2\}$ and they belong to the classes 
$[\Da_0]$, $[\Da_{P_1}]$, $[\Da_{P_1}]$, and $[\Da_0]$  respectively.

We can iterate this procedure on the classifying graph for $\mathbb{O}_{P_1+P_2}$ (Figure 1D) at a third place
$P_3$. Note that $\Ea[P_1+nP_3,P_2-nP_3]\cong \Ea[nP_2+P_1,(1-n)P_2]$.
\begin{figure}[h]
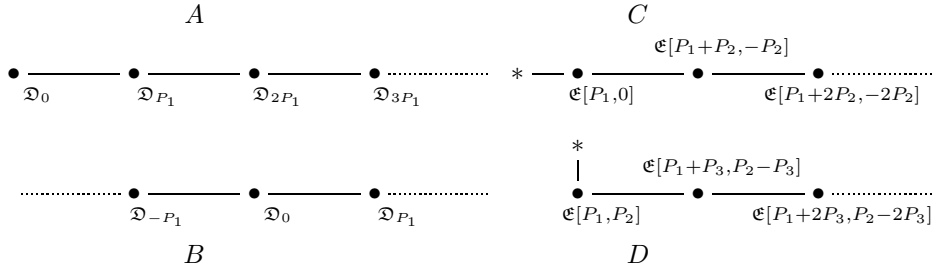

\[  \xygraph{
!{<0cm,0cm>;<.8cm,0cm>:<0cm,.8cm>::}
!{(0,4)}*+{\bullet}="a" !{(0.4,3.6)}*+{{}^{\Da_0}}="a1"
!{(2,4)}*+{\bullet}="b" !{(2.4,3.6)}*+{{}^{\Da_{P_1}}}="b1"
!{(4,4)}*+{\bullet}="c" !{(4.4,3.6)}*+{{}^{\Da_{2P_1}}}="c1"
!{(6,4)}*+{\bullet}="x" !{(6.4,3.6)}*+{{}^{\Da_{3P_1}}}="x1"
!{(8,4)}*+{}="d"
!{(0,2)}*+{}="m"
!{(2,2)}*+{\bullet}="e" !{(2.4,1.6)}*+{{}^{\Da_{-P_1}}}="e1"
!{(4,2)}*+{\bullet}="f" !{(4.4,1.6)}*+{{}^{\Da_0}}="f1"
!{(6,2)}*+{\bullet}="g" !{(6.4,1.6)}*+{{}^{\Da_{P_1}}}="g1"
!{(8,2)}*+{}="h"
!{(3,5)}*+{A}="z" !{(3,1)}*+{B}="z1"
 "a"-"b" "b"-"c" "c"-"x" "x"-@{.}"d"
 "e"-"f" "f"-"g"
"g"-@{.}"h" "m"-@{.}"e"} 
\xygraph{
!{<0cm,0cm>;<.8cm,0cm>:<0cm,.8cm>::}
!{(1,4)}*+{*}="a" 
!{(2,4)}*+{\bullet}="b" !{(2.4,3.6)}*+{{}^{\Ea[P_1,0]}}="b1"
!{(4,4)}*+{\bullet}="c" !{(4.4,4.4)}*+{{}^{\Ea[P_1+P_2,-P_2]}}="c1"
!{(6,4)}*+{\bullet}="x" !{(6.4,3.6)}*+{{}^{\Ea[P_1+2P_2,-2P_2]}}="x1"
!{(8,4)}*+{}="d"
!{(2,2.8)}*+{*}="m"
!{(2,2)}*+{\bullet}="e" !{(2.4,1.6)}*+{{}^{\Ea[P_1,P_2]}}="e1"
!{(4,2)}*+{\bullet}="f" !{(4.4,2.4)}*+{{}^{\Ea[P_1+P_3,P_2-P_3]}}="f1"
!{(6,2)}*+{\bullet}="g" !{(6.4,1.6)}*+{{}^{\Ea[P_1+2P_3,P_2-2P_3]}}="g1"
!{(8,2)}*+{}="h"
!{(3,5)}*+{C}="z" !{(3,1)}*+{D}="z1"
 "a"-"b" "b"-"c" "c"-"x" "x"-@{.}"d"
 "e"-"f" "f"-"g"
"g"-@{.}"h" "m"-"e"} 
\]\caption{Four graphs used in the proof of Lemma 3.4.}\end{figure}
If we try to use this graph to prove that all edges correspond to split orders we find a problem.
The vertex corresponding to $\Ea[P_1,P_2]$ has valency $3$ in the S-graph, with only two
of its edges having representatives in the maximal path, joining this vertex to $\Ea[P_1-P_3,P_2+P_3]$
and $\Ea[P_1+P_3,P_2-P_3]$, both isomorphic to $\Ea[P_1+P_2,0]$ . They are identified in
the classifying graph by the formula
$$\bmattrix 0f10\Ea[B,D]{\bmattrix 0f10}^{-1}=\Ea[D-\mathrm{div}(f),B+\mathrm{div}(f)],$$
by setting $\mathrm{div}(f)=P_2-P_1$.
Any other edge whose starting point is $\Ea[P_1,P_2]$ is in the class corresponding
to the third vertex in the S-graph. Since every vertex in the S-graph corresponding to
a different conjugacy class has valency 2 with non-isomorphic neighbors, this third vertex can only
join two orders isomorphic to $\Ea[P_1,P_2]$. We conclude that the classifying graph looks like
the one in Figure 1D. The vertical half-edge joining $\Ea[P_1,P_2]$ with a virtual endpoint has no representative
on the main maximal path, but 
it might have a representative in the maximal path corresponding to a different global idempotent.
 We must prove that this is not the case.
Assume that the Eichler order $\Ea$ corresponding to this edge has an idempotent global section $\rho$.
We observe that both $P_3$-faces of the corresponding grid correspond to  
conjugates of the order $\Ea[P_1,P_2]$, whence the maximal orders corresponding to each of the
eight vertices belongs to the class shown in Figure 2.
 Assume a basis is chosen in a way that $\rho=\sbmattrix 1000$. 
Conjugating by suitable diagonal matrices, we can assume that one of the vertices 
in the class $[\Da_0]$ is actually $\Da_0$. Then, no choice of
the signs in the neighboring vertices, which must be $\Da_{P_i}$ or $\Da_{-P_i}$ in each case, give us
the configuration of classes shown in Figure 2. This is a contradiction.   
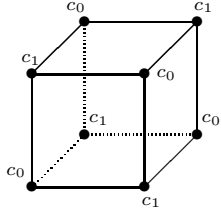
\begin{figure}
\unitlength 1mm 
\linethickness{0.4pt}
\ifx\plotpoint\undefined\newsavebox{\plotpoint}\fi 
\[
\begin{picture}(28,25)(0,0)
\put(0,0){\line(0,1){15}}\put(0,0){\line(1,0){15}}
\put(15,0){\line(0,1){15}}\put(0,15){\line(1,0){15}}
\put(0,15){\line(1,1)7}\put(15,15){\line(1,1)7}
\put(7,22){\line(1,0){15}}\put(22,7){\line(0,1){15}}
\put(0,15){\line(1,1)7}\put(15,15){\line(1,1)7}\put(15,0){\line(1,1)7}
\multiput(0,0)(0.4375,0.4375){16}{\line(0,-1){.05}}
\multiput(7,7)(0,0.5){30}{\line(0,1){.05}}
\multiput(7,7)(0.5,0){30}{\line(1,0){.05}}
\put(0,0){\makebox(0,0)[cc]{$\bullet$}}\put(7,7){\makebox(0,0)[cc]{$\bullet$}}
\put(22,7){\makebox(0,0)[cc]{$\bullet$}}\put(15,0){\makebox(0,0)[cc]{$\bullet$}}
\put(0,15){\makebox(0,0)[cc]{$\bullet$}}\put(7,22){\makebox(0,0)[cc]{$\bullet$}}
\put(22,22){\makebox(0,0)[cc]{$\bullet$}}\put(15,15){\makebox(0,0)[cc]{$\bullet$}}
\put(-2,2){\makebox(0,0)[cc]{${}_{c_0}$}}\put(9,9){\makebox(0,0)[cc]{${}_{c_1}$}}
\put(24,9){\makebox(0,0)[cc]{${}_{c_0}$}}\put(16,-2){\makebox(0,0)[cc]{${}_{c_1}$}}
\put(0,17){\makebox(0,0)[cc]{${}_{c_1}$}}\put(6,24){\makebox(0,0)[cc]{${}_{c_0}$}}
\put(23,24){\makebox(0,0)[cc]{${}_{c_1}$}}\put(18,15){\makebox(0,0)[cc]{${}_{c_0}$}}
\end{picture}
\] \caption{Conjugacy classes of the maximal orders containing the only non-split Eichler order
in the genus $\mathbb{O}_{B}$ when $B$ is the sum of three different points of degree 1.
For simplicity we use $c_n=[\Da_{nP_1}]$.}
\end{figure}
\end{proof}

\begin{lemma}\label{l42}
Let $X$ be an arbitrary smooth projective curve, and
let $P\in |X|$ be an arbitrary point. Then there is an infinite
set of conjugacy classes of non-split orders in  $\mathbb{O}_{2P}$.
\end{lemma}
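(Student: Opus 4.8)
### Proof Strategy for Lemma 4.6

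The plan is to build an infinite family of pairwise non-conjugate non-split Eichler orders in $\mathbb{O}_{2P}$ by a doubling argument that reduces everything to the splitness criterion from Lemma 4.3 (the lemma immediately preceding this excerpt's final statement). The key observation is that an Eichler order of level $2P$ corresponds to a path of length $2$ in the local tree $\mathfrak{T}(K_P)$, that is, to a pair of maximal orders at local distance exactly $2$ together with the (unique) midpoint order. By the argument of Lemma 4.3, if such an order $\Ea=\Da_B\cap\Da_{B'}$ were split, there would have to exist divisors $B_0\sim\pm B$ and $B_0'\sim\pm B'$ with $|B_0-B_0'|=2P$. So the whole game is: produce infinitely many pairs of maximal orders $(\Da_B,\Da_{B'})$ at local distance $2$ at $P$ for which no such divisor relation can hold, and for which the resulting Eichler orders lie in distinct conjugacy classes.

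First I would set up the local picture at $P$: pick a uniformizer $\pi_P$ and work inside $\mathfrak{T}(K_P)$. Take the "standard" maximal order $\Da=\Da_0$ (or any fixed base maximal order) as the center, and consider the maximal orders $\Da'$ at local distance $2$ from $\Da$ that are \emph{not} conjugate to $\Da_{nP}$ for the specific $n$'s forced by the split criterion. Concretely, iterating the classifying-graph analysis of \cite{cqqgvro} at $P$, the orders $\Da_{nP}$ sit on a single ray, and distance-$2$ neighbors in that ray correspond to $n\mapsto n\pm 2$; but the tree has many \emph{other} vertices at distance $2$ from a given $\Da_{nP}$, lying "off the arithmetic ray." The strategy is to realize a family of such off-ray vertices globally. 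One clean way: use a divisor $B$ of large positive degree supported away from $P$ whose class is non-torsion-free in the sense that $B$ is not linearly equivalent to any divisor of the form $\pm B' \pm (\text{small divisor at }P)$; on an arbitrary curve the class group is large enough to make this easy once $\deg B$ is big. Then set $\Da_B$ and let $\Da_{B'}$ be one of its distance-$2$ neighbors at $P$ obtained by moving "perpendicular" to the ray through $\Da_{nP}$.

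The heart of the argument — and the step I expect to be the main obstacle — is showing (a) that infinitely many of these Eichler orders are pairwise \emph{non-conjugate}, and (b) that they are genuinely \emph{non-split}, i.e., that the divisor obstruction $|B_0-B_0'|=2P$ really fails. For (a) I would use the invariant "unordered pair of conjugacy classes of the two containing maximal orders, together with their local-distance-$2$ position," which is a conjugacy invariant of an Eichler order by Proposition 3.5 and the grid description; since there are infinitely many conjugacy classes of maximal orders on any curve (the classes $[\Da_{nP}]$ already give infinitely many, by \cite[\S II.2.3]{trees} when $X=\mathbb{P}^1$ and more generally via the class group / degree), a suitable subfamily is forced to be infinite. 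For (b), suppose $\Ea$ is split; by Lemma 4.3 we get $B_0\sim\pm B$, $B_0'\sim\pm B'$ with $|B_0-B_0'|=2P$, hence $B_0-B_0'=\pm 2P$, $\pm(P+P')$ — but the only possibility compatible with the support and degree of our chosen $B$, $B'$ is $B_0-B_0'=\pm 2P$, forcing $B\sim \pm B' \pm 2P$; choosing $B$ outside that (finite, for fixed $B'$) set of classes gives the contradiction. Since for each fixed $B'$ only finitely many classes of $B$ are excluded, and there are infinitely many classes available, we obtain an infinite non-split family, completing the proof.
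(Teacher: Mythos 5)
Your high-level strategy --- reduce non-splitness to the divisor criterion of the earlier lemma on split Eichler orders, exhibit infinitely many distance-$2$ configurations violating it, and separate them by the conjugacy classes of their containing maximal orders --- is the right one, and the non-conjugacy step of your plan is sound. The genuine gap is in the existence step. You propose to fix $\Da_B$ and then take a distance-$2$ neighbour ``off the arithmetic ray,'' choosing $B$ to avoid the finitely many classes with $B\sim\pm B'\pm 2P$; but the two maximal orders bounding an Eichler order of level $2P$ cannot have their conjugacy classes chosen independently. Once you fix a vertex of the Bruhat--Tits tree at $P$ and walk two steps, the class of the endpoint is dictated by the structure of the quotient graph, and you never establish that any pair of classes violating the criterion is actually realized by a liftable length-$2$ path. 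This is precisely what the paper's proof supplies: deep in a cusp of the classifying graph $C_P(\Da)$, the vertex $[\Da_{B+nP}]$ has exactly one neighbour in $[\Da_{B+(n+1)P}]$ and all the others in $[\Da_{B+(n-1)P}]$, so for every large $n$ there is a length-$2$ path whose two endpoints lie in the \emph{same} class $[\Da_{B+(n-1)P}]$ with midpoint in $[\Da_{B+nP}]$. That ``tent'' configuration is the one that kills splitness: a split order of level $2P$ whose middle maximal order has positive degree is $\Ea[D,2P-D]$ with extreme orders $\Da_D$ and $\Da_{D-2P}$, and since $\deg(D-P)>0$ one has $|\deg D|\neq|\deg(D-2P)|$, so the two extreme orders cannot be conjugate --- contradicting the fact that in the tent they are. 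In your language this is the diagonal pair $(c,c)$ with $2C\not\sim\pm 2P$, which your ``exclude finitely many classes'' framework does cover, but only after the combinatorial fact that such pairs occur at distance $2$ has been proved; the off-ray vertices are not free parameters.

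Two smaller inaccuracies: since the absolute value $|B_0-B'_0|$ equals $2P$, the only possibility is $B_0-B'_0=\pm 2P$, so the case $\pm(P+P')$ you list cannot arise; and on an arbitrary curve not every maximal order is of the form $\Da_B$ (non-split bundles exist), so the argument must be confined to the cusps of the classifying graph, where the orders are of that form --- appealing to ``the class group being large'' is misleading, as $\mathrm{Pic}^0$ of a curve over a finite field is finite and the relevant infinitude comes from the degree.
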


\begin{proof}
Fix an order $\Ea$ of level $2P$ and a maximal order $\Da$ containing $\Ea$. 
 Any cusp in the classifying graph $C_P(\Da)$ looks like in Figure 3A, where each
order in the class $[\Da_{B+nP}]$ has one neighbor in the class $[\Da_{B+(n+1)P}]$ and
all the others in the class $[\Da_{B+(n-1)P}]$. Since the orders $\Ea'\in\mathbb{O}_{2P}$
satisfying $\Ea'(U)=\Da(U)$, where $U=X-\{P\}$, correspond to paths of length $2$
 in the bruhat tits tree, for every
value of $n>1$ there exists an Eichler order contained in one order in the class $[\Da_{B+nP}]$ 
and two orders in the class $[\Da_{B+(n-1)P}]$. We claim that such orders are non-split for $n>-\mathrm{deg}(B)$.
As they are evidently in different conjugacy classes, the result would follow.  Now let $\Ea$
be an Eichler order of level $2P$ whose grid has vertices in the conjugacy classes shown
 in Figure 3B. If $\Ea$ were split, by an appropiate 
choice of coordinates we can assume $\Ea=\Ea[D,D']$ where $D+D'=2P$ or $-D'=D-2P$, whence
the three maximal orders containing $\Ea$ must be $\Da_D$, $\Da_{D-P}$ and $\Da_{D-2P}$,
with $D-P$ linearly equivalent to $B+nP$, and hence of positive degree. 
\begin{figure}[h]
\[  
\unitlength 1mm 
\linethickness{0.4pt}
\ifx\plotpoint\undefined\newsavebox{\plotpoint}\fi 
\begin{picture}(40,15)(0,-6)
\put(0,0){\line(0,1){5}}\put(0,5){\line(1,0){5}}\put(0,0){\line(1,0){5}}\put(5,0){\line(0,1){5}}
\put(5,2.5){\makebox(0,0)[cc]{$\bullet$}}\put(5.2,2.5){\line(1,0){8}}\put(13.7,2.5){\makebox(0,0)[cc]{$\bullet$}}
\put(14.2,2.5){\line(1,0){8}}\put(22.7,2.5){\makebox(0,0)[cc]{$\bullet$}}
\put(23.2,2.5){\line(1,0){8}}\put(31.7,2.5){\makebox(0,0)[cc]{$\bullet$}}
\put(36,2.5){\makebox(0,0)[cc]{$\cdots\cdots$}}
\put(5,-4){\makebox(0,0)[cc]{$A$}}\put(2.5,2.5){\makebox(0,0)[cc]{${}_X$}}
\put(13.7,5){\makebox(0,0)[cc]{${}_{\Da_B}$}}
\put(22.7,0){\makebox(0,0)[cc]{${}_{\Da_{B+P}}$}}
\put(31.7,5){\makebox(0,0)[cc]{${}_{\Da_{B+2P}}$}}
\end{picture}
\qquad\qquad
\xygraph{
!{<0cm,0cm>;<.8cm,0cm>:<0cm,.8cm>::}
!{(2,2)}*+{\bullet}="e" !{(2.4,1.6)}*+{{}^{[\Da_{B+(n-1)P}]}}="e1"
!{(4,2)}*+{\bullet}="f" !{(4.4,2.4)}*+{{}^{[\Da_{B+nP}]}}="f1"
!{(6,2)}*+{\bullet}="g" !{(6.4,1.6)}*+{{}^{[\Da_{B+(n-1)P}]}}="g1"
 !{(3,1)}*+{B}="z1"
 "e"-"f" "f"-"g"} 
\]\caption{Two graphs used in the proof of Lemma \ref{l42}. The square marked "X" denotes a possibly infinite subgraph.}\end{figure}
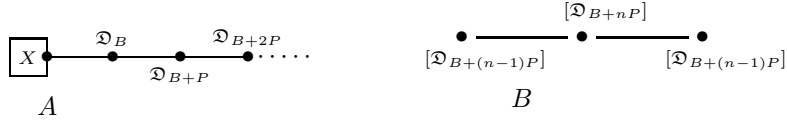
We conclude that the absolute value of the degrees of the divisors $D$ and $D-2P$ 
are different, so the corresponding orders cannot be conjugate. The result follows.
\end{proof}

\begin{remark}
At the end of the preceding proof, we can also prove that $\Ea$ is not split by observing that 
$\Da_{B+(n+1)P}$, as a neighbor of $\Da_{B+nP}=\Da_\Lambda$, correspond to a common eigenspace, in 
$\Lambda_P/\pi_P\Lambda_P$, of every idempotent global section of $\Da_{B+nP}$, whence no idempotent 
in the ring of global sections $\Da_{B+nP}(X)$ has two eigenspaces in $\Lambda_P/\pi_P\Lambda_P$
corresponding to $P$-neighbors isomorphic to $\Da_{B+(n-1)P}$.
\end{remark}

\begin{example}\label{e44}
Let $P$ and $Q$ be points of degree 1 in the proyective line $\mathbb{P}^1$. Consider an order
$\Ea\in\mathbb{O}_{2P}$ and the classifying graph $C_Q(\Ea)$. The vertices of this graph, or equivalently,
the conjugacy classes in $\mathbb{O}_{2P}$ are in correspondence with orbits of paths of length $2$ in the
corresponding Bruhat-Tits tree. By analyzing the orbits
of the action of the group $\Da(X)^*$ of invertible global 
sections of the central vertex $\Da$ of each path, on pairs of elements in the 
corresponding projective line over the residue field, 
we can prove that these vertices correspond precisely 
to simplicial maps $\gamma:\mathfrak{I}_{0,2}
\rightarrow C_P(\Da)$ that can be lifted to paths in $\mathfrak{T}(K_P)$. 
The latter condition rules out the maps satisfying $$\gamma_V(n_0)=\gamma_V(n_2)=[\Da_{(n+1)P}],\qquad
\gamma_V(n_1)=[\Da_{nP}],$$ as such a map has no injective lifting, because $\Da_{nP}$ has a unique
neighbor in the class $[\Da_{(n+1)P}]$. All other maps in Figure 1A can be lifted to paths in $\mathfrak{T}$,
and hence correspond to Eichler orders.
 They are the split orders $\Ea[P,P],\Ea[2P,0],\Ea[3P,-P],\dots$ and the orders 
$\Fa_r$ which are intersections of two orders in the class $[\Da_{rP}]$ and contained in an order in the class
$[\Da_{(r+1)P}]$ as in Figure 3B. 

The edges of the graph  $C_Q(\Ea)$ are in correspondence with the admisible grids like the one in Figure 4A.
By switching the role played by the places $P$ and $Q$, these grids are also in correspondence with simplicial maps
$\gamma:\mathfrak{I}_{0,2}\rightarrow C_P(\Da)$, up to reversal, in the graph in Figure 1C, where again we
must consider only the maps lifting to paths. A few of these grids are shown in Figure 4C-E. 
\begin{figure}[h]
\[  
\unitlength 1mm 
\linethickness{0.4pt}
\ifx\plotpoint\undefined\newsavebox{\plotpoint}\fi 
\begin{picture}(20,15)(0,-6)
\put(0,0){\line(0,1){8}}\put(0,8){\line(1,0){16}}\put(0,0){\line(1,0){16}}\put(8,0){\line(0,1){8}}\put(16,0){\line(0,1){8}}
\put(5,-4){\makebox(0,0)[cc]{$A$}}
\end{picture}
\qquad\qquad
 \xygraph{
!{<0cm,0cm>;<.8cm,0cm>:<0cm,.8cm>::}
!{(0,2)}*+{\bullet}="a" !{(0.4,1.6)}*+{\Fa_1}="a1"
!{(2,2)}*+{\bullet}="b" !{(2.4,1.6)}*+{\Fa_0}="b1"
!{(4,2)}*+{\bullet}="c" !{(4.4,1.6)}*+{{}^{\Ea[P,P]}}="c1"
!{(6,2)}*+{\bullet}="x" !{(6.4,1.6)}*+{{}^{\Ea[2P,0]}}="x1"
!{(8,2)}*+{}="d"
!{(-2,2)}*+{}="m"
!{(3,3)}*+{B}="z" 
 "a"-"b" "b"-"c" "c"-"x" "x"-@{.}"d"
 "m"-@{.}"a"}  
\]
\[
\unitlength 1mm 
\linethickness{0.4pt}
\ifx\plotpoint\undefined\newsavebox{\plotpoint}\fi 
\begin{picture}(20,17)(0,-6)
\put(0,0){\line(0,1){8}}\put(0,8){\line(1,0){16}}\put(0,0){\line(1,0){16}}\put(8,0){\line(0,1){8}}\put(16,0){\line(0,1){8}}
\put(0,0){\makebox(0,0)[cc]{$\bullet$}}\put(0,8){\makebox(0,0)[cc]{$\bullet$}}
\put(8,0){\makebox(0,0)[cc]{$\bullet$}}\put(8,8){\makebox(0,0)[cc]{$\bullet$}}
\put(16,0){\makebox(0,0)[cc]{$\bullet$}}\put(16,8){\makebox(0,0)[cc]{$\bullet$}}
\put(2,1){\makebox(0,0)[cc]{${}^{c_0}$}}\put(2,9){\makebox(0,0)[cc]{${}^{c_1}$}}
\put(10,1){\makebox(0,0)[cc]{${}^{c_1}$}}\put(10,9){\makebox(0,0)[cc]{${}^{c_2}$}}
\put(18,1){\makebox(0,0)[cc]{${}^{c_0}$}}\put(18,9){\makebox(0,0)[cc]{${}^{c_1}$}}
\put(5,-4){\makebox(0,0)[cc]{$C$}}
\end{picture}\qquad
\unitlength 1mm 
\linethickness{0.4pt}
\ifx\plotpoint\undefined\newsavebox{\plotpoint}\fi 
\begin{picture}(20,17)(0,-6)
\put(0,0){\line(0,1){8}}\put(0,8){\line(1,0){16}}\put(0,0){\line(1,0){16}}\put(8,0){\line(0,1){8}}\put(16,0){\line(0,1){8}}
\put(0,0){\makebox(0,0)[cc]{$\bullet$}}\put(0,8){\makebox(0,0)[cc]{$\bullet$}}
\put(8,0){\makebox(0,0)[cc]{$\bullet$}}\put(8,8){\makebox(0,0)[cc]{$\bullet$}}
\put(16,0){\makebox(0,0)[cc]{$\bullet$}}\put(16,8){\makebox(0,0)[cc]{$\bullet$}}
\put(2,1){\makebox(0,0)[cc]{${}^{c_1}$}}\put(2,9){\makebox(0,0)[cc]{${}^{c_0}$}}
\put(10,1){\makebox(0,0)[cc]{${}^{c_0}$}}\put(10,9){\makebox(0,0)[cc]{${}^{c_1}$}}
\put(18,1){\makebox(0,0)[cc]{${}^{c_1}$}}\put(18,9){\makebox(0,0)[cc]{${}^{c_0}$}}
\put(5,-4){\makebox(0,0)[cc]{$D$}}
\end{picture}\qquad
\unitlength 1mm 
\linethickness{0.4pt}
\ifx\plotpoint\undefined\newsavebox{\plotpoint}\fi 
\begin{picture}(20,17)(0,-6)
\put(0,0){\line(0,1){8}}\put(0,8){\line(1,0){16}}\put(0,0){\line(1,0){16}}\put(8,0){\line(0,1){8}}\put(16,0){\line(0,1){8}}
\put(0,0){\makebox(0,0)[cc]{$\bullet$}}\put(0,8){\makebox(0,0)[cc]{$\bullet$}}
\put(8,0){\makebox(0,0)[cc]{$\bullet$}}\put(8,8){\makebox(0,0)[cc]{$\bullet$}}
\put(16,0){\makebox(0,0)[cc]{$\bullet$}}\put(16,8){\makebox(0,0)[cc]{$\bullet$}}
\put(2,1){\makebox(0,0)[cc]{${}^{c_2}$}}\put(2,9){\makebox(0,0)[cc]{${}^{c_1}$}}
\put(10,1){\makebox(0,0)[cc]{${}^{c_1}$}}\put(10,9){\makebox(0,0)[cc]{${}^{c_0}$}}
\put(18,1){\makebox(0,0)[cc]{${}^{c_0}$}}\put(18,9){\makebox(0,0)[cc]{${}^{c_1}$}}
\put(5,-4){\makebox(0,0)[cc]{$E$}}
\end{picture}
\]
\caption{The domino-shaped grid (A) used to compute the graph
in Ex. \ref{e44} (B). In (C)-(E) we have the grids corresponding to the three central edges in (B).
Again, we use $c_n=[\Da_{nP}]$.}\end{figure}
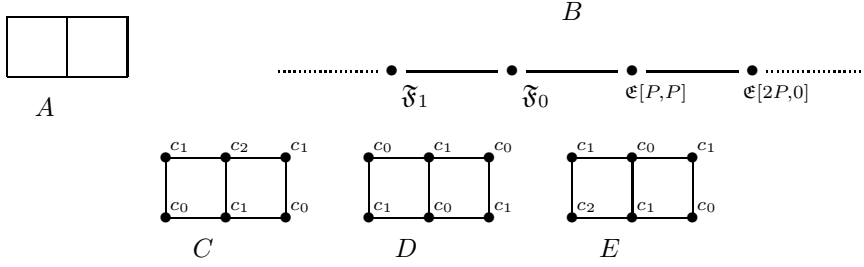
We conclude that the graph $C_Q(\Ea)$ looks as in Figure 4B. 
\end{example}

\subparagraph{Proof of Theorem  \ref{t2}}
Let $D$ be an effective divisor on $\mathbb{P}^1$. First we assume $D$ is the sum of at most two
places of degree $1$. Then $D\leq P_1+P_2$ for some pair of places $P_1$ and $P_2$ of degree 1.
By looking at the product of the local Bruhat-Tits trees at $P_1$ and $P_2$ we observe that 
any order in $\mathbb{O}_D$ corresponds to a vertex, edge, or grid contained in the $1$-times-$1$
grid corresponding to an order $\Ea'\in\mathbb{O}_{P_1+P_2}$. This is a split order, as shown in Lemma 
\ref{l41}, whence its ring $\Ea'(X)$ of global sections contains a non-trivial idempotent.
Since $\Ea(X)\supseteq\Ea'(X)$, the same holds for $\Ea$ and the result follows. In any other case,
$D\geq B$ for a divisor $B$ in one of the following cases: 
\begin{enumerate}
\item $B=2P$, where $\mathop{\mathrm{deg}}(P)=1$,
\item $B=P_1+P_2+P_3$, where $\mathop{\mathrm{deg}}(P_1)=
\mathop{\mathrm{deg}}(P_2)=\mathop{\mathrm{deg}}(P_3)=1$, or 
\item $B=P$, where $P$ is a place satisfying $\mathop{\mathrm{deg}}(P)>1$.
\end{enumerate}
Then the result follows from Lemma \ref{l42}, Lemma \ref{l41}, or Lemma \ref{l34}, by a similar reasoning.
\qed 

\begin{lemma}
Let $P_1,\dots,P_n\in |X|$ be different. Then there is an finite
set of conjugacy classes of non-split orders in  $\mathbb{O}_{P_1+\cdots+P_n}$.
\end{lemma}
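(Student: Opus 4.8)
I plan to prove the statement by controlling the corners of the grids $\mathbb{S}(\Ea)$, without induction. Two finiteness facts will be used freely. First, $X$ carries only finitely many conjugacy classes of non‑split maximal orders: indecomposable rank‑$2$ bundles of a fixed degree form a bounded family over $\finitum$ (the stable ones being parametrised by a variety, the remaining ones extensions of line bundles of bounded degree), twisting by line bundles changes the degree only by an even integer, and $\mathrm{Pic}^0(X)$ is finite; by the same token there are only finitely many split classes $[\Da_B]$ with $\deg B$ bounded by a given constant. Second, for a fixed conjugacy class $c$ of maximal orders there are only finitely many conjugacy classes of Eichler orders of level $D=P_1+\cdots+P_n$ having a representative of $c$ as a corner of their grid: a $D$‑grid with a prescribed corner $\Da$ is determined by $\Da$ together with a choice, for each $i$, of one $P_i$‑neighbour of $\Da$ in $\mathfrak{T}(K_{P_i})$ — finitely many choices — and the group $\Da(X)^*$, being the unit group of the endomorphism algebra of a bundle on $X$, is finite because $\finitum$ is finite.

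The crux is the following claim: if $\Ea\in\mathbb{O}_D$ has a corner conjugate to $\Da_B$ with $\deg B>M_0:=2g_X-1+\sum_i\deg P_i$, then $\Ea$ is split. Granting it, every corner of a non‑split $\Ea\in\mathbb{O}_D$ lies in the finite set $\mathcal F$ of conjugacy classes formed by the non‑split classes together with the $[\Da_B]$ having $0\le\deg B\le M_0$; choosing one corner and invoking the second finiteness fact for each class of $\mathcal F$, only finitely many non‑split $\Ea$ occur, which is the assertion.

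To prove the claim I would argue as follows. Realise the given corner as $\Da_C=\sbmattrix{\oink_X}{\mathfrak{L}^C}{\mathfrak{L}^{-C}}{\oink_X}$ with $\deg C>M_0$. Since $\deg C>0$ we have $\mathfrak{L}^{-C}(X)=0$, so $\Da_C(X)^*=\sbmattrix{\finitum^*}{\mathfrak{L}^C(X)}{0}{\finitum^*}$, and since $\deg C$ and $\deg(C-\sum_iP_i)$ both exceed $2g_X-2$, Riemann–Roch gives that the natural reduction map $\mathfrak{L}^C(X)\vaa\prod_i\finitum(P_i)$ is surjective. Identifying the $P_i$‑neighbours of $\Da_C$ with $\mathbb{P}^1\big(\finitum(P_i)\big)$, the group $\Da_C(X)^*$ acts on each factor through the affine transformations fixing the point $\langle e_1\rangle$, and, by the surjectivity just quoted, its translation subgroup acts transitively on $\prod_i\big(\mathbb{P}^1(\finitum(P_i))\setminus\{\langle e_1\rangle\}\big)$ while fixing every $\langle e_1\rangle$; hence after conjugating by a suitable element of $\Da_C(X)^*$ we may assume each of the $n$ edges of $\mathbb{S}(\Ea)$ leaving that corner points in one of the coordinate directions $\langle e_1\rangle$ or $\langle e_2\rangle$. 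The opposite corner is then $\Da_{C'}$ with $C'=C+\sum_i\varepsilon_iP_i$, $\varepsilon_i\in\{\pm1\}$, and therefore
$$\Ea=\Da_C\cap\Da_{C'}=\bmattrix{\oink_X}{\mathfrak{L}^{\min(C,C')}}{\mathfrak{L}^{-\max(C,C')}}{\oink_X},$$
which contains $\sbmattrix 1000$ as a global section and so is split; note $\max(C,C')-\min(C,C')=\big|\sum_i\varepsilon_iP_i\big|=\sum_iP_i=D$, which is exactly where the multiplicity‑freeness of $D$ enters.

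The main obstacle is this claim, and inside it the normalisation of the $n$ edge directions to coordinate directions: it hinges on the surjectivity of $\mathfrak{L}^C(X)\vaa\prod_i\finitum(P_i)$ for $\deg C>M_0$ together with the identification of the $\Da_C(X)^*$‑action on each residue projective line with the affine group fixing $\langle e_1\rangle$ (so that one translation simultaneously sends every non‑$\langle e_1\rangle$ direction to $\langle e_2\rangle$). The two finiteness facts and the concluding explicit computation of $\Ea$ are routine once this is in place.
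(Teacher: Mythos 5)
Your proof is correct, and it reaches the conclusion by a genuinely different route from the paper's. The paper argues by induction on $n$: the base case is Serre's finiteness theorem for maximal orders, and the inductive step analyses the classifying graph $C_{P_{t+1}}(\mathbb{O}_{P_1+\cdots+P_t})$, showing that all but finitely many of its vertices are split orders $\Ea[B,B']$ with $\deg(B')$ large, whose unit groups have only two orbits on the set of $P_{t+1}$-neighbours (the Remark at the end of \S 3), so that every grid having such a grid as a $P_{t+1}$-cap is again split. You avoid the induction entirely by anchoring the whole $D$-grid at a single maximal-order corner $\Da_C$ and normalising all $n$ outgoing edges simultaneously; the price is the simultaneous surjectivity of $\mathfrak{L}^C(X)\rightarrow\prod_i\finitum(P_i)$, i.e. the vanishing of $H^1\big(\mathfrak{L}^{C-\sum_iP_i}\big)$, which is the same Riemann--Roch input the paper uses, applied once to the product of residue fields rather than $n$ times to single factors. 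What your route buys is a self-contained argument that never needs the structure of the classifying graphs of the intermediate genera $\mathbb{O}_{P_1+\cdots+P_t}$ --- only Serre's theorem for maximal orders (which you re-derive by a bounded-family argument, where the paper simply cites \cite{trees}) together with the grid combinatorics; the paper's inductive route fits its broader programme of computing classifying graphs level by level and reuses its Remark verbatim at each step. Both arguments correctly isolate where multiplicity-freeness enters: in yours it is the fact that the opposite corner $C'=C+\sum_i\varepsilon_iP_i$ satisfies $|C-C'|=D$ for every choice of signs $\varepsilon_i$, which already fails for $D=2P$.
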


\begin{proof}
We prove this by induction on $n$. This was proved by Serre \cite[Th. II.9]{trees} for the genus
of Maximal orders ($n=0$), so we assume it holds for $n=t$ and prove it for $n=t+1$.
 Conjugacy classes in $\mathbb{O}_{P_1+\cdots+P_t}$ are in correspondence with the vertices in 
the classifying graph $C_{P_{t+1}}(\mathbb{O}_{P_1+\cdots+P_t})$, so all but a finite number of them
correspond to the conjugacy class of an order $\Ea[B,B']$, where $B+B'=P_1+\cdots+P_t$. By switching
$B$ and $B'$ if needed, we can assume 
$\mathop{\mathrm{deg}}(B)\leq\mathop{\mathrm{deg}}(B')$.  Furthermore, a second
order $\Ea[B'',B''']\in\mathbb{O}_{P_1+\cdots+P_t}$ with $B''$ linearly equivalent to $B$ is in the same 
conjugacy class, so by leaving out a finite number of conjugacy classes, we can always assume
$\mathop{\mathrm{deg}}(B)<M$ for any prescribed constant $M$. In particular, we can assume also
that $B'$ has positive degree, and therefore $\mathfrak{L}^{-B'}(X)=\{0\}$, while
$$\mathrm{dim}_{\finitum}\Big(\mathfrak{L}^{-B}(X)/\mathfrak{L}^{-B-P_{t+1}}(X)\Big)=
[\finitum(P_{t+1}):\finitum]$$
 by Riemann-Roch' Theorem.  We conclude that $\Ea[B,B'](X)^*$ 
has two orbits on the set of neighbors of $\Ea[B,B']$ by the remark at the end of last section. In particular
the corresponding vertex on $C_{P_{t+1}}(\mathbb{O}_{P_1+\cdots+P_t})$ has valency one or two,
and therefore every admissible grid having the grid corresponding to $\Ea[B,B']$ as a $P_{t+1}$-cap
corresponds to either $\Ea[B+P_{t+1},B']$ or $\Ea[B,B'+P_{t+1}]$, in either case it is split. As every
admissible $(P_1+\cdots+P_t)$-grid is the $P_{t+1}$-cap of a finite number of admissible 
$(P_1+\cdots+P_t+P_{t+1})$-grids, the result follows.
\end{proof}

\subparagraph{Proof of Theorem  \ref{t1}}
 If $D$ a multiplicity-free effective divisor, then the result follows from the preceding lemma. Assume now
that $D$ is not multiplicity-free. Then there is a place $P\in|X|$ satisfying $2P\leq D$. It follows that every
order in $\mathbb{O}_{2P}$ contains an order in $\mathbb{O}_D$. Now the result follows from two 
observations:
\begin{enumerate}
\item Every order containing a split order is split.
\item Every order in $\mathbb{O}_D$ is contained in a finite number of orders in $\mathbb{O}_{2P}$.
\end{enumerate}
The first statement follows as in the proof of Theorem \ref{t2}.
The second statement is an immediate consequence of the combinatorial structure of products of 
Bruhat-Tits trees. We conclude that
there is an infinite number of non-conjugate orders in  $\mathbb{O}_D$ contained in non-split orders
in $\mathbb{O}_{2P}$, whence the result follows. \qed
 
\section{Acnowledgements}
This work was suported by Fondecyt, Grant No 1160603.

\end{document}